\documentclass[12pt,reqno]{amsart} 
\usepackage{amsmath}
\usepackage{amssymb, latexsym, amsfonts, amscd, amsthm, mathrsfs, enumerate, esint}
\usepackage[usenames,dvipsnames]{color}
\usepackage[all]{xy}
\usepackage{graphicx}
\usepackage{epsfig}
\usepackage{hyperref}
\usepackage{marginnote}
\usepackage{mathtools} 			
\usepackage{bbm}
\usepackage{amssymb}
\usepackage{amsmath}
\usepackage{amsfonts,latexsym,amstext}
\usepackage{marginnote}
\usepackage[numbers,sort]{natbib}
\usepackage{stmaryrd}
\usepackage{soul}

\numberwithin{equation}{section}

\definecolor{purple}{rgb}{0.9,0,0.8}

\definecolor{gray}{rgb}{0.7,0.7,0.7}

\newtheorem{thm}{Theorem}[section]
\newtheorem{cor}[thm]{Corollary}
\newtheorem{lem}[thm]{Lemma}
\newtheorem{ppn}[thm]{Proposition}

\theoremstyle{definition}
\newtheorem{defn}[thm]{Definition}

\newtheorem{rem}[thm]{Remark}

\topmargin=0in
\oddsidemargin=0in
\evensidemargin=0in
\textwidth=6.5in
\textheight=8.5in

\newcommand{\beq}{\begin{equation}}
\newcommand{\eeq}{\end{equation}}




\newcommand{\D}{\mathcal{D}}

\newcommand{\F}{\mathcal{F}}

\newcommand{\M}{\mathbb{M}}
\newcommand{\N}{\mathbb{N}}

	\renewcommand{\P}{\mathbb{P}}

\newcommand{\R}{\mathbb{R}}

\newcommand{\Z}{\mathbb{Z}}















\DeclareMathOperator{\argmin}{arg\,min}





\renewcommand{\varnothing}{\varnothing}

\renewcommand{\setminus}{\backslash}









\renewcommand{\M}{{\cal M}}

\DeclareMathOperator{\reals}{\mathbb{R}}

\renewcommand{\M}{\mathcal{M}}



\DeclareMathOperator{\constW}{c_1}  
\DeclareMathOperator{\constC}{c_2} 
\DeclareMathOperator{\constR}{c_2} 
\DeclareMathOperator{\constG}{c_2}    
\DeclareMathOperator{\constF}{c_3} 	
\DeclareMathOperator{\constS}{c_4}  
\DeclareMathOperator{\constT}{c_5}  
\DeclareMathOperator{\constZ}{c_6}  
\DeclareMathOperator{\constA}{c_7} 
\DeclareMathOperator{\constB}{c_8} 
\DeclareMathOperator{\constN}{c_9} 
\DeclareMathOperator{\constM}{c_{10}} 
\DeclareMathOperator{\constL}{c_{11}} 
\DeclareMathOperator{\constP}{c_{12}} 
\DeclareMathOperator{\constQ}{c_{13}} 
\DeclareMathOperator{\constX}{c_{14}}  
\DeclareMathOperator{\constH}{c_{15}} 
\DeclareMathOperator{\constI}{c_{16}} 
\DeclareMathOperator{\constJ}{c_{17}} 
\DeclareMathOperator{\constY}{c_{18}}  

\DeclareMathOperator{\constD}{\lambda} 




\definecolor{pink}{rgb}{0.858, 0.188, 0.478}

\title[FPP and Distance Learning]{Nonhomogeneous Euclidean first-passage percolation and distance learning}

\author[P. Groisman]{P. Groisman}
\address{IMAS-CONICET, Departamento de Matem\'atica, Fac. Cs. Exactas y Naturales, Universidad de Buenos Aires, Argentina and NYU-ECNU Institute of Mathematical Sciences at NYU Shanghai}
\email{pgroisma@dm.uba.ar}
\author[M. Jonckheere]{M. Jonckheere}
\address{IMAS-CONICET and Intituto de C\'alculo, Fac. Cs. Exactas y Naturales, Universidad de Buenos Aires.}
\email{mjonckhe@dm.uba.ar}
\author[F. Sapienza]{F. Sapienza}
\address{Aristas S.R.L. and Departamento de Matem\'atica, Fac. Cs. Exactas y Naturales, Universidad de Buenos Aires, Buenos Aires, Argentina. }
\email{f.sapienza@aristas.com.ar}

\begin{document}

\begin{abstract}
Consider an i.i.d. sample from an unknown density function supported on an unknown manifold embedded in a high dimensional Euclidean space.
We tackle the problem of learning a distance between points, able to capture both the geometry of the manifold and the underlying density.
We define such a sample distance and prove the convergence, as the sample size goes to infinity, to a macroscopic one that we call {\em Fermat distance} as it minimizes a path functional, resembling Fermat principle in optics. 
The proof boils down to the study of geodesics in Euclidean first-passage percolation for nonhomogeneous Poisson point processes. 
\end{abstract}


\subjclass[2010]{60D05, 60K35, 60K37, 62G05, 60G55}
\keywords{Distance learning, Euclidean First-Passage Percolation, nonhomogeneous point processes.}

\maketitle

\section{Introduction}

The main motivation for this article is the following problem: 

\begin{quotation}
Let $Q_n= \{q_1, \dots, q_n\}$ be independent random points with common density supported on a Riemannian manifold. Define a distance in $Q_n$ that captures {both} the intrinsic structure of the manifold and the density. 
\end{quotation}

This problem {arises naturally} in tasks like clustering or dimensionality reduction of high-dimensional data, for which the notion of distance between points that is used is crucial. A typical example is the problem of clustering images according to their {visual} content (say, pictures of hand-writing digits). Even for black and white low-resolution pictures, as low as $30\times30$ pixels, the ambient space is {already} $\R^{900}$. Two important considerations in this kind of problems are:	
	\begin{itemize}
	\item \textit{Curse of dimensionality.} Euclidean or Minkowsky distance are not a good choice because in high dimensional spaces every two points of a typical large 
	set are at similar distance \cite{aggarwal01}.
	\item \textit{Data support.} Real data usually lies in a manifold of much smaller dimension. They can be described with a few degrees of freedom, each of these representing one intrinsic variable that parametrize the manifold. In this context, {the} Euclidean distance can be very different from {the} geodesic one, which is more adequate.
	\end{itemize}

{Nevertheless,} considering the geodesic distance {might still not be good enough} since it does not take into account the underlying {density} of the points given by $f$. For example, if $f$ is given by a mixture (with equal weights) of two one-dimensional Gaussian distributions with means 0, 10 and variances $1$ and $2$, respectively, we would like the point 5 to be closer to 10 than to 0. Of course, for a real case scenario the manifold and the density function $f$ are unknown, {but for many learning tasks, it is certainly preferable} to define a distance that takes both into account. 

A fundamental step towards solving this problem was done by Tenenbaum, de Silva and Landford with Isomap, \cite{Tenenbaum00}. This estimator {was shown to achieve better results for} dimensionality reduction tasks by estimating the geodesic distance between points. However, it is independent of the density $f$ from which points are sampled and, as a consequence, it is unable to give/use information about it.  In \cite{CH, CH2} the authors consider sample statistics that capture the intrinsic dimension of the manifold and the intrinsic entropy. They take into account both the manifold structure and the density function. {More general results in the manifold setting are given in \cite{penrose2013limit}.}
Although {their estimators} have a similar flavor to our proposal, they are different in the sense that they consider global properties of the manifold while we are concerned with the distance between pairs of points. {The use of density based distance was explored in several papers,\cite{orlitsky2005estimating, chang2005robust, bijral2012semi, alamgir2012shortest}.} 

{In this article, we elaborate on a distance learning methodology that we introduce in \cite{SGJ}, focusing here mainly on the mathematical aspects of the problem. We define the {\em Fermat distance}, a macroscopic quantity that measures the distance between two points in a manifold in this context, and the {\em Sample Fermat distance} as a distance inferred from the data that estimates the former one. Our contribution is then three-fold:}

\begin{itemize}

\item {{\em Consistency.} We show that a scaled version of the sample Fermat distance convergences almost surely towards the macroscopic Fermat distance, on connected open sets of Euclidean space and as a consequence also on manifolds.}

\item { {\em Convergence of geodesics.}  We show that sample geodesics (minimizers of our microscopic action functional) do converge towards macroscopic minimizers of the macroscopic functional that defines the Fermat distance. The core of the proof is a bound from above for the arc length of geodesics in nonhomogeneous Euclidean first-passage percolation.}

\item  { {\em Complexity.} We show that with large probability the sample Fermat distance can be computed in $\mathcal O (n^2\log^2 n)$ operations by restricting ourselves to ``local'' paths.
}
  
\end{itemize}

{ These fundamental mathematical properties shed light on the potential efficiency of the sample Fermat distance for unsupervized learning tasks.} 
For a more detailed discussion on real applications to distance and manifold learning, clustering, dimensionality reduction and comparison with other methods, computational aspects, etc. we refer to \cite{SGJ} {(see also \cite{Tenenbaum00, CH} on alternative proposals)}. A {practical 
implementation of an algorithm computing our proposed distance}
can be downloaded from \url{http://www.aristas.com.ar/fermat/index.html}.  

As a byproduct of the analysis we obtain a {\em Shape Theorem} for first-passage percolation in nonhomogeneous point processes as in \cite{HN} (see Corollary \ref{cor:shape.theorem} below). In our setting, due to the inhomogeneous space, a huge family of shapes rather than Euclidean ball can be obtained depending on the density of points. 

\subsection*{Related work}
After a preprint version of this paper was uploaded to Arxiv, McKenzie and Damelin \cite{MCD} proposed {independently} similar clustering applications as in \cite{SGJ}, focusing more specifically on spectral clustering. They use the term {\em  power weighted shortest path metric} (p-wspm) for what we call {\em Fermat distance} and consider a slightly different setting. They assume that the data is sampled from a density supported in a disjoint union of manifolds and they use this distance in a different way. While we consider $K-$medoids algorithm for clustering, they perform spectral clustering (note that both algorithms depend strongly on the notion of distance between data points that is used). They also present a fast algorithm for computing these distances. They show in specific synthetic and real-data examples that spectral clustering with Fermat distance outperforms the classical spectral clustering algorithm with Euclidean distance, similar to what we show in \cite{SGJ} with $K-$medoids. On a different context, in \cite{Carpio777169} the use of Fermat distance in combination with Topological Data Analysis techniques is considered to understand the topology of gene expressions in healthy and cancer tissue. By means of persistent homology, the authors are able to distinguish the topology of these two datasets.

\section{Definitions and main results}

Following \cite{HN, HN2}, let $Q$ be a non-empty, locally finite, subset of $\R^d$. We refer to the elements $q \in Q$ as {\em particles}. For any $x \in \R^d$ we denote $q(x)$ the center of the Voronoi cell of $x$ with respect to $Q$. That is, $q(x)$ is the particle closest to $x$ in Euclidean distance. Given $x, y \in \R^d$, a {\em path} from $x$ to $y$ is a finite sequence of particles $(q_1, \dots, q_k)$ with $k\ge 2$, $q_1=q(x)$ and $q_k=q(y)$. The line segment from $x$ to $y$ is denoted $\overline{xy}$ and $\overline{(q_1, \dots, q_k)}$ denotes the polygonal path of line segments $\overline{q_1 q_2}, \, \overline{q_2 q_3}, \dots, \overline{q_{k-1} q_k}$. We also use $|\overline{(q_1, \dots, q_k)}|$ for its arc length, $|x|$ for the Euclidean norm of $x$ and for $a>0$, $C \subset \R^d$, $B(C,a)$ is the set given by
\[
B(C,a)=\bigcup_{z\in C} B(z,a),
\]
where $B(z,a)$ is the open ball centered at $z$ with radius $a$ with respect to the Euclidean norm. {We can now define the {sample Fermat distance}.}

\begin{defn}
For $\alpha \ge 1$ and $x,y \in \R^d$, we define the {\em sample Fermat distance} as
\begin{equation}
D_{Q, \alpha}(x,y)=\inf \left \{ \sum_{j=1}^{K-1}|q_{j+1}-q_j|^\alpha: (q_1, \dots, q_K) \text{ is a path from $x$ to $y$}, \,\, K\ge 1 \right \}.
\label{D_Q} 
\end{equation}
\end{defn}

Notice that {$D_{Q, \alpha}$} {satisfies the} triangular inequality and defines a metric over $Q$ and a pseudometric over $\R^d$. When not strictly necessary we will drop the dependence of all these quantities on $\alpha$ and $Q$. 

This distance was considered previously in \cite{HN, HN2} to construct continuous models of first-passage percolation {and extended in \cite{hwang2016} to the setting in which $Q$ is a set of independent random points with common density $f$ supported on a Riemannian manifold. The key difference between \cite{hwang2016} and our setting is that in \cite{hwang2016} the authors consider intrinsic geodesic distance raised to the power $\alpha$ as weights in \eqref{D_Q} instead of the Euclidean distance, while we keep the last one.
 Their goal is to work in the (more general) intrinsic manifold setting while ours is to define a (computable) estimator of a suitable distance. Although in both cases we get the same limiting object, the microscopic objects are different and so do the proofs, with the exception of the case in which $S$ is an open convex set of $\mathbb R^d$, where geodesic and Euclidean distance do coincide. In that case, our Corollary \ref{cor:n.points} below is indeed contained in \cite{hwang2016}. The Shape Theorem, Corollary \ref{cor:shape.theorem}, the convergence of geodesics, Corollary \ref{cor:convrgence.of.geodesics} and the local nature of geodesics, Proposition \ref{prop:k_nn}, crucial to compute geodesics, are new (even in the case of an open convex set). When $S$ is not convex, or a manifold with dimension strictly smaller than the one of the ambient space, also Corollary \ref{cor:n.points} is new.}

We will focus on (but not restrict {ourselves} to) the case in which $Q$ is a Poisson Point Process (PPP) although the intensity function will be different in different instances. We assume that all the processes involved are constructed in {a probability space} $(\Omega, \F, \P)$.
All the ``almost sure'' statements are with respect to $\P$. We write $Q\sim {\rm Poisson}(S,g)$ when $Q$ is a PPP on $S$ with intensity function $g$ with respect to volume element on $S$. We include here the possibility that $S$ is a manifold with dimension smaller than $d$.  

Notice that for $\alpha=1$ this distance coincides with {the} Euclidean distance but for $\alpha>1$ large {jumps} are discouraged {and this results in a different distance that penalizes paths in which consecutive points are far away to each other}. We also call $r_{Q,\alpha}(x, y)$ the unique path along which $D_{Q,\alpha}(x,y)$ is achieved when it is defined (that is the case a.s. if, for example, $x$, $y$ are deterministic and $Q$ is a PPP, \cite{HN}).

\medskip


Next we define a macroscopic version of the sample Fermat distance that we call the {\em macroscopic Fermat distance} or simply the {Fermat distance} or as follows.

\begin{defn}
For a continuous and positive function $f$, {$\beta \ge 0$} and $x,y \in S$ we define the {\em Fermat distance} $\mathcal D_{f,\beta}(x,y)$ as
\begin{equation}
\label{Fermat.Distance}
\mathcal T_{f, \beta}(\gamma) = \int_\gamma f^{-\beta}, \qquad  \mathcal D_{f,\beta}(x,y) = \inf_\gamma \mathcal T_{f, \beta}(\gamma). 
\end{equation}
Here the infimum is taken over all continuous and rectifiable paths $\gamma$ contained in $\bar S$, {the closure of $S$}, that start at $x$ and end at $y$; and the integral is {understood with} respect to arc length given by Euclidean distance. 
\end{defn}
We will omit the dependence on $\beta$ and $f$ when not strictly necessary. This definition coincides with Fermat Principle in optics for the path followed by light in a non-homogeneous media when the refractive index is given by $f^{-\beta}$. We will call the minimizer $\gamma^\star$ in \eqref{Fermat.Distance} a {\em macroscopic $f$-geodesic} between $x$ and $y$. Observe that $f$-geodesics are likely to lie in regions where $f$ is large.

\subsection{Consistency}

Our main result consists in proving that the sample Fermat distance when appropriately scaled converges to the Fermat distance.
In other words, the scaled sample Fermat distance is a strongly consistent estimator of the macroscopic one.

\begin{thm}
\label{teo:main_theorem_poisson}
Let $S \subset \R^d$ be an open connected set with $C^1$ (or empty) boundary. Let $f\colon \bar S \to [m_f,M_f]$ be a continuous intensity function. Assume $m_f>0$. For each $n\in \N$ let $Q_n \sim {\rm Poisson}(S, nf)$. Given $x,y \in S$ and $\varepsilon >0$, there exist constants $\mu, \constW, \constC$ and $n_0$ such that
		\begin{equation}
		\P \left ( \left | n^\beta D_{Q_n,\alpha} (x, y) - \mu \mathcal D_{f, \beta}(x,y) \right| > \varepsilon \right) \le e^{-\constW n^{\constC}}, 
		\label{eq:main_theorem}
		\end{equation}
		for $\beta = (\alpha-1)/d$  and every $n \ge n_0$. In particular
		\begin{equation*}
		\lim_{n \to \infty} n^\beta D_{Q_n,\alpha} (x, y) = \mu \mathcal D_{f, \beta}(x,y) \quad \text{almost surely.}
		\end{equation*}
\end{thm}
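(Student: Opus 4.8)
The plan is to prove the quantitative estimate \eqref{eq:main_theorem}; the almost sure convergence then follows immediately from Borel--Cantelli since the bounds $e^{-\constW n^{\constC}}$ are summable. The overall strategy is a two-sided comparison: I will show that, with overwhelming probability, $n^\beta D_{Q_n,\alpha}(x,y)$ is not much larger than $\mu\,\mathcal D_{f,\beta}(x,y)$ (the upper bound, via an explicit competitor path) and not much smaller (the lower bound, via a covering/patching argument). The constant $\mu$ should be identified as the time constant of \emph{homogeneous} Euclidean first-passage percolation with exponent $\alpha$ at unit intensity, i.e. the macroscopic speed arising from the classical shape theorem of Howard--Newman \cite{HN, HN2}.

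For the \textbf{upper bound}, fix a near-optimal macroscopic $f$-geodesic $\gamma$ from $x$ to $y$, so $\mathcal T_{f,\beta}(\gamma)\le \mathcal D_{f,\beta}(x,y)+\varepsilon'$. Partition $\gamma$ into finitely many short arcs $\gamma_1,\dots,\gamma_N$ on each of which $f$ is nearly constant, say $f\approx f_i$ on a small tube around $\gamma_i$. On the $i$-th tube the rescaled point process $Q_n$ looks locally like a homogeneous PPP of intensity $n f_i$; rescaling space by $(n f_i)^{1/d}$ turns it into an intensity-one PPP, and a long geodesic of homogeneous FPP across a region of Euclidean diameter $\ell_i$ has weight $\approx \mu\, (n f_i)^{-\beta}\cdot\text{(number of rescaled units)} \approx \mu\, \ell_i\, (n f_i)^{-\beta}$ (using $\beta=(\alpha-1)/d$, which is exactly the scaling exponent that makes the segment weights $|q_{j+1}-q_j|^\alpha$ aggregate correctly). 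Concatenating the segment-geodesics gives a path from $x$ to $y$ whose total $D_{Q_n,\alpha}$-weight is, after multiplying by $n^\beta$, at most $\mu\sum_i \ell_i f_i^{-\beta}+o(1)\approx \mu\,\mathcal T_{f,\beta}(\gamma)+o(1)$. Each segment estimate holds with probability at least $1-e^{-\constW n^{\constC}}$ by the concentration half of the shape theorem, and there are only $N=O(1)$ segments, so a union bound preserves the exponential rate.

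For the \textbf{lower bound}, let $(r_n)$ be the (a.s.\ unique) sample geodesic realizing $D_{Q_n,\alpha}(x,y)$. The danger is that $r_n$ might wander through a low-density region to exploit unusually favorable local configurations; to control this I need an a priori bound on the arc length $|r_n|$ — this is exactly the content the excerpt flags as the core of the convergence-of-geodesics argument (bounding the arc length of nonhomogeneous Euclidean FPP geodesics from above). Granting such a bound, the geodesic lives in a fixed compact set, which I cover by $O(1)$ small balls on each of which $f$ is within a factor $1+\varepsilon'$ of a constant $f_i$. On the $i$-th ball, the portion of $r_n$ inside it costs at least (roughly) $\mu\,(n f_i)^{-\beta}$ times its rescaled traversal length, again by the shape theorem applied to a homogeneous FPP that \emph{dominates} the local process from below after rescaling. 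Summing over the balls along $r_n$ and using that the Euclidean lengths add up to at least the length of \emph{some} continuous path from $x$ to $y$, one gets $n^\beta D_{Q_n,\alpha}(x,y)\ge \mu\inf_\gamma \sum_i \ell_i f_i^{-\beta} - o(1)\ge \mu\,\mathcal D_{f,\beta}(x,y)-\varepsilon'$, with the failure probability of all the local estimates again bounded by $O(1)\cdot e^{-\constW n^{\constC}}$.

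\textbf{Main obstacle.} The delicate step is the lower bound, and within it the a priori arc-length control on the sample geodesic $r_n$: without it the geodesic could in principle escape to regions where the density $f$ is small (so that few particles are present and the FPP weights behave unpredictably), and the patching-into-homogeneous-pieces argument would break down. Establishing this requires showing that any path using an excessively long detour accumulates weight strictly larger than the near-straight competitor constructed in the upper bound, uniformly with exponentially small exceptional probability — essentially a renormalization argument partitioning space into boxes, declaring a box ``good'' if the local FPP behaves like the homogeneous one, showing good boxes occur with high probability, and concluding that a geodesic cannot afford to traverse too many boxes. Matching the constants on the two sides (so that the same $\mu$ appears) and making sure the $\varepsilon'$-discretization errors in $f$ can be sent to $0$ after the exponential bounds are in hand are the remaining technical points, but they are routine once the arc-length bound and the local shape-theorem estimates are available.
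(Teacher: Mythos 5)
Your overall plan matches the paper's: sandwich the nonhomogeneous process between homogeneous Poisson processes locally, use Howard--Newman's shape theorem for the homogeneous case as a black box, and glue local estimates along a partition of a near-optimal macroscopic geodesic for the upper bound and a covering/union-bound for the lower bound. The scaling $\beta=(\alpha-1)/d$ and the identification of $\mu$ as the homogeneous time constant are exactly what the paper uses, and your upper-bound argument is essentially the paper's.

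Where you diverge is in the lower bound, in two ways. First, you lean on an a priori arc-length bound for the sample geodesic (the paper's Proposition \ref{prop:bounded.arc.length}). That bound \emph{is} proved in the paper (Section \ref{arclength}) using only the rough two-sided estimate of Lemma \ref{lema:bounds} plus a cell-counting/Chernoff argument, so your route is not circular; but the paper deliberately does not invoke it in the proof of Theorem \ref{teo:main_theorem_poisson}. Instead it directly shows that the number $k$ of macroscopic ($\approx\delta$-spaced) waypoints along the sample geodesic is $O(1)$ with overwhelming probability, by showing each sub-segment costs $\gtrsim\Delta$ while the total is $\lesssim 2\mu m_f^{-\beta}\mathcal D_0(x,y)$. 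Second --- and this is the genuine gap in your sketch --- you never explain how to make the union bound over ``all possible geodesics'' finite. The sample geodesic is a random object ranging over an uncountable collection of paths; the paper handles this by snapping each waypoint $q_i^\star$ to a deterministic net $\mathcal V$ of size $O(n)$, paying a controlled error (because points within $\delta_0 n^{-1/d}$ have $D_{Q_n}$-distance $\lesssim (2\delta_0)^\alpha n^{-\alpha/d}$), and then union-bounding over the $(\kappa_3 n)^K$ resulting discrete paths. Your ``renormalization into good boxes'' phrasing gestures at something like this but doesn't pin down the net or the error bookkeeping, which is exactly where the proof is delicate: without it, ``summing over balls along $r_n$'' doesn't yield a deterministic event to union-bound over. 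If you add that discretization step (and use Lemma \ref{lemma:entornos} to reduce to a bounded $S$ when $S$ is unbounded), your variant goes through; the paper's version just reaches the same place without needing Proposition \ref{prop:bounded.arc.length} as a prerequisite.
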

The Poisson assumption can be replaced by assuming $Q$ is an i.i.d. sample with density $f$ using a simple large deviations estimate.
\begin{cor}
\label{cor:n.points}
The same result holds if we replace $Q_n$ by a set of $n$ independent points with common density $f$.
\end{cor}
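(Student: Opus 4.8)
The plan is to deduce the i.i.d. statement from Theorem \ref{teo:main_theorem_poisson} via a standard Poissonization/de-Poissonization coupling, controlling the discrepancy with a large deviations bound for the Poisson count. First I would fix a rate $\lambda_n$ slightly below and slightly above $n$ — concretely, set $N^-_n = n - n^{1/2+\delta}$ and $N^+_n = n + n^{1/2+\delta}$ for a small $\delta>0$ — and build on a common probability space, by the usual superposition/thinning construction, three point configurations $Q^-, Q^{\mathrm{iid}}, Q^+$ such that $Q^- \subseteq Q^{\mathrm{iid}} \subseteq Q^+$, where $Q^\pm \sim \mathrm{Poisson}(S, N^\mp_n f / \text{(total mass)})$ after rescaling so that the \emph{expected} number of points is $N^\mp_n$, and $Q^{\mathrm{iid}}$ is the first $n$ points of $Q^+$ (equivalently, $Q^{\mathrm{iid}}$ restricted to $Q^-$ plus a controlled remainder). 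The key monotonicity observation is that $D_{Q,\alpha}(x,y)$ is nonincreasing in $Q$: adding particles can only create new admissible paths, hence
\begin{equation*}
D_{Q^+,\alpha}(x,y) \le D_{Q^{\mathrm{iid}},\alpha}(x,y) \le D_{Q^-,\alpha}(x,y),
\end{equation*}
on the event that the Poisson counts of $Q^\pm$ are on the correct side of $n$, which fails with probability at most $e^{-c n^{2\delta}}$ by a Chernoff bound for Poisson random variables.

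Next I would apply Theorem \ref{teo:main_theorem_poisson} to each of $Q^-$ and $Q^+$, which are (up to a harmless reparametrization of the density, absorbing the normalizing constant into $f$) Poisson processes with intensity $N^\mp_n \tilde f$; this gives, off an event of probability at most $e^{-\constW (N^\mp_n)^{\constC}} \le e^{-\constW' n^{\constC}}$, that
\begin{equation*}
\left| (N^\mp_n)^\beta D_{Q^\pm,\alpha}(x,y) - \mu \, \mathcal D_{\tilde f, \beta}(x,y) \right| \le \varepsilon/2.
\end{equation*}
Since $(N^\mp_n / n)^\beta = (1 \pm n^{-1/2+\delta})^\beta \to 1$, replacing $N^\mp_n$ by $n$ in the prefactor costs only an additional $O(n^{-1/2+\delta})$ error, which is $<\varepsilon/4$ for $n$ large. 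Sandwiching $n^\beta D_{Q^{\mathrm{iid}},\alpha}(x,y)$ between the two Poisson quantities via the displayed monotonicity then yields $\bigl| n^\beta D_{Q^{\mathrm{iid}},\alpha}(x,y) - \mu \mathcal D_{f,\beta}(x,y)\bigr| > \varepsilon$ with probability at most the sum of the three bad events, i.e. at most $e^{-\constW'' n^{\constC''}}$ for suitable constants, and Borel--Cantelli gives the almost sure limit.

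The one point requiring care — and the place I expect the main (though modest) obstacle — is the bookkeeping in the coupling: an i.i.d. sample of size exactly $n$ is not literally a thinning of a Poisson process, so one must instead use that a Poisson($\lambda_n$) process \emph{conditioned} on having $N$ points is an i.i.d. sample of size $N$, and argue that the conditioning events $\{\#Q^+ \ge n\}$ and $\{\#Q^- \le n\}$ have probability $1 - o(1)$ with exponentially small complement, so that the i.i.d. law of size $n$ is faithfully represented inside the sandwich. One also checks that the scaling identity $\mathcal D_{cf,\beta} = c^{-\beta}\mathcal D_{f,\beta}$ correctly matches the reparametrization, so that the limiting constant is the same $\mu \mathcal D_{f,\beta}(x,y)$; this is immediate from the definition \eqref{Fermat.Distance}. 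No new first-passage-percolation input is needed beyond Theorem \ref{teo:main_theorem_poisson} itself.
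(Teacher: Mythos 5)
Your proposal is correct and is exactly the argument the paper has in mind: the text explicitly leaves Corollary \ref{cor:n.points} to the reader, remarking only that it follows from a large deviations estimate for Poisson random variables, which is precisely the sandwich coupling $Q^- \subseteq Q^{\mathrm{iid}} \subseteq Q^+$ (with Poisson counts $N^\mp_n = n \mp n^{1/2+\delta}$), the monotonicity of $D_{Q,\alpha}$ in $Q$ already used in the proof of Lemma \ref{lema:bounds}, and a Chernoff bound for the Poisson counts. The only blemishes are notational (you write $Q^\pm \sim \mathrm{Poisson}(S, N^\mp_n f/\cdot)$ where the superscripts should agree, and the ``total mass'' normalization is vacuous since $f$ is a probability density in the corollary), and the de-Poissonization is cleanest if one builds $Q^\pm$ and $Q^{\mathrm{iid}}$ from a single i.i.d. sequence $U_1,U_2,\dots$ with independent Poisson stopping times rather than by conditioning.
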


If we ``invert'' Theorem \ref{teo:main_theorem_poisson} as in \cite[Theorem 1]{HN} we obtain the Shape Theorem. For $t>0$ define
\[
 \mathbb B_n(x,t) = \{ y\in \R^d \colon D_{Q_n,\alpha}(x,y)<t\} \text{ and }  \mathcal B(x,t) = \{ y\in \R^d \colon \mathcal D_{f,\beta}(x,y)<t\}.
\]

\begin{cor}[Shape Theorem]\label{cor:shape.theorem} In the setting of Theorem \ref{teo:main_theorem_poisson}, let $x\in S$, $\varepsilon$ with $0<\varepsilon<\mu^{-1}$ and $t$ such that $\mathcal B(x,(\mu^{-1}+\varepsilon)t) \subset S$. Then, a.s. there is $n_0$ such that for $n\ge n_0$ we have
\[
\mathcal B(x,(\mu^{-1}- \varepsilon)t) \subset n^\beta \mathbb B_n(x,t) \subset  \mathcal B(x,(\mu^{-1}+\varepsilon)t).
\]

\end{cor}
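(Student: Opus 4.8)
The plan is to deduce the Shape Theorem directly from Theorem \ref{teo:main_theorem_poisson} by the standard "inversion" argument used in \cite[Theorem 1]{HN}, upgrading the probabilistic bound \eqref{eq:main_theorem} to an almost sure statement by Borel--Cantelli. First I would fix $x\in S$, $\varepsilon$ with $0<\varepsilon<\mu^{-1}$ and $t$ with $\mathcal B(x,(\mu^{-1}+\varepsilon)t)\subset S$. The geometric content is the following two inclusions, to be proved on an event of full measure for all large $n$:
\[
\mathcal B\bigl(x,(\mu^{-1}-\varepsilon)t\bigr)\subset n^\beta\,\mathbb B_n(x,t)
\quad\text{and}\quad
n^\beta\,\mathbb B_n(x,t)\subset \mathcal B\bigl(x,(\mu^{-1}+\varepsilon)t\bigr).
\]
Unwinding the definitions, $y\in n^\beta\mathbb B_n(x,t)$ means $n^{-\beta}y\in\mathbb B_n(x,t)$, i.e. $n^\beta D_{Q_n,\alpha}(x,n^{-\beta}y)<n^\beta t$; but here there is no rescaling of space in our setup — in our formulation $\mathbb B_n$ and $\mathcal B$ both live in the same (unscaled) $\R^d$, so $n^\beta\mathbb B_n(x,t)=\{y: n^\beta D_{Q_n,\alpha}(x,y)<n^\beta t\}$. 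Thus the desired inclusions are exactly
\[
\mathcal D_{f,\beta}(x,y)<(\mu^{-1}-\varepsilon)t\ \Longrightarrow\ n^\beta D_{Q_n,\alpha}(x,y)<n^\beta t
\]
and its counterpart with $(\mu^{-1}+\varepsilon)t$ and the reverse implication; equivalently, after dividing by $\mu$, that for all $y$ in the relevant region $|n^\beta D_{Q_n,\alpha}(x,y)-\mu\mathcal D_{f,\beta}(x,y)|<\mu\varepsilon t$.

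The difficulty is that Theorem \ref{teo:main_theorem_poisson} gives the estimate for a \emph{fixed} pair $x,y$, whereas the Shape Theorem requires it \emph{uniformly} over all $y$ in the compact set $\overline{\mathcal B(x,(\mu^{-1}+\varepsilon)t)}$. I would handle this by a covering argument: cover this compact set by finitely many balls $B(y_i,\delta)$, $i=1,\dots,N$, with $\delta$ small enough that both $|\mathcal D_{f,\beta}(y,y')|$ and the fluctuation of $D_{Q_n,\alpha}$ across a $\delta$-ball are controlled — for $\mathcal D_{f,\beta}$ this is immediate from $f\le M_f$, giving $\mathcal D_{f,\beta}(y,y')\le M_f^{\beta}|y-y'|$; for $D_{Q_n,\alpha}$ one uses monotonicity/triangle inequality plus the fact that with overwhelming probability every point of the region has a $Q_n$-particle within distance $O((\log n/n)^{1/d})\ll \delta$ of it (a routine Poisson void-probability estimate), so that $D_{Q_n,\alpha}(x,y)$ and $D_{Q_n,\alpha}(x,y')$ differ by at most a negligible amount. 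Applying \eqref{eq:main_theorem} at each of the finitely many centers $y_i$ with error parameter $\mu\varepsilon t/2$, and summing the $N$ (an $n$-independent number of) exponentially small probabilities $e^{-\constW n^{\constC}}$, I get that $\P(\text{some }y_i\text{ fails})\le N e^{-\constW n^{\constC}}$, which is summable in $n$; Borel--Cantelli then yields an a.s. finite $n_0$ beyond which \emph{no} $y_i$ fails, and the covering/continuity estimates propagate the bound from the $y_i$ to all $y$ in the region.

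The main obstacle I anticipate is exactly this passage from a pointwise to a uniform-in-$y$ statement — specifically, verifying that $D_{Q_n,\alpha}(x,\cdot)$ does not oscillate wildly at small scales, which requires the auxiliary "every point is close to a particle" lemma and a bound of the form $D_{Q_n,\alpha}(x,y')\le D_{Q_n,\alpha}(x,y)+D_{Q_n,\alpha}(y,y')$ together with an a.s. upper bound $D_{Q_n,\alpha}(y,y')\le C|y-y'|^\alpha$-type control when $y,y'$ are within a particle-spacing of each other. One must also be slightly careful that the constants $\mu,\constW,\constC,n_0$ produced by Theorem \ref{teo:main_theorem_poisson} may depend on the chosen endpoints; but since the $y_i$ all lie in a fixed compact subset of $S$ and $x$ is fixed, one can take the worst of the finitely many constants. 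Once uniformity is in hand, the two inclusions follow by the elementary implications displayed above, completing the proof.
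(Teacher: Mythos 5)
Your plan matches what the paper intends when it points to the inversion argument of \cite[Theorem 1]{HN}: reduce the two inclusions to a uniform bound $\left|n^\beta D_{Q_n,\alpha}(x,y)-\mu\mathcal D_{f,\beta}(x,y)\right|<\mu\varepsilon t$ over the compact set $\overline{\mathcal B(x,(\mu^{-1}+\varepsilon)t)}$, establish it at a covering net of points via Theorem \ref{teo:main_theorem_poisson}, interpolate using the Lipschitz continuity of $\mathcal D_{f,\beta}(x,\cdot)$ and the triangle inequality for $D_{Q_n,\alpha}$, and pass to an almost sure statement by Borel--Cantelli.

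Two points to tighten. First, a typo: the displayed implication should conclude $n^\beta D_{Q_n,\alpha}(x,y)<t$, not $<n^\beta t$; your subsequent restatement $\left|n^\beta D_{Q_n,\alpha}-\mu\mathcal D_{f,\beta}\right|<\mu\varepsilon t$ is the correct form. Second, the interpolation step as written is not quite closed: with a \emph{fixed}-radius $\delta$-cover, you need to control $n^\beta D_{Q_n,\alpha}(y,y_i)$ for $|y-y_i|$ up to $\delta$, and the single-step bound $D_{Q_n,\alpha}(y,y_i)\le |q(y)-q(y_i)|^\alpha\approx\delta^\alpha$ blows up after multiplying by $n^\beta$. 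The ``nearest particle within $O((\log n/n)^{1/d})$'' estimate is the right tool, but you must then either take the net at the mesoscopic scale $\sim(\log n/n)^{1/d}$ (the number of net points is then of order $n/\log n$, but $n\,e^{-\constW n^{\constC}}$ is still summable, so Borel--Cantelli goes through), or keep a fixed $\delta$-net and separately bound $n^\beta D_{Q_n,\alpha}(y,y_i)$ via the upper-bound half of Lemma \ref{lema:bounds} applied to nearby pairs of grid points combined with the nearest-particle estimate. You allude to both mechanisms but should commit to one; also note, as you do, that the finitely many centers stay at distance $\ge\delta_0$ from $x$ so the $\delta$-dependent constants in the lemmas are uniform, with the small ball $B(x,\delta_0)$ handled separately by the trivial bounds.
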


\begin{rem}
If $S$ is not connected and $x$ and $y$ belong to different connected components of $\bar S$, we have $\mathcal D_{f,\beta}(x,y)=\liminf n^\beta D_{Q_n}(x,y)=\infty$ a.s. If $x,y$ belong to the same connected component, we can restrict ourselves to this component. So, the connectedness assumption {can be dropped and is assumed for simplicity.}

If the Euclidean norm in \eqref{D_Q} is replaced by another distance, similar results can be obtained with the line integrals with respect to arc length replaced by line integrals with respect to the distance involved. It could be interesting to explore other choices.

The $C^1$ smoothness assumption for the boundary of $S$ is not really necessary either and can be relaxed up to some point. {For instance, it is enough (but actually  not necessary) to suppose $S$ to be locally convex at points of the boundary where it is not $C^1$.}  Also, if we allow $m_f=0$ (which can be done with some extra work), no regularity assumptions are needed at boundary points where $f$ vanishes. In fact, the only problem one needs to deal with is the case in which the macroscopic geodesic intersects the boundary. This case can be avoided in several ways, but it can certainly happen if $S$ is not convex and $f$ is not negligible at the boundary. We assume in the sequel the stronger $C^1$ assumption to simplify the exposition.
\end{rem}

As a consequence of Theorem \ref{teo:main_theorem_poisson}, we obtain a similar result for points supported on a lower dimensional manifold. We will say that $\M$ is an isometric {$d$-dimensional} $C^1$ manifold embedded in $\R^D$ if there exists $S \subset \R^d$ an open connected set and $\phi : \bar S \to \R^D$ an isometric transformation such that $\phi(\bar S)=\M$. As we mentioned before, in real applications, we expect $d\ll D$, but this is not required.


\begin{thm} 	\label{teo:manifolds}
Assume $\M$ is an isometric $C^1$ $d$-dimensional manifold embedded in $\R^D$ and $f\colon\M \to \reals_{+}$ is a continuous probability density function. Let $Q_n=\{q_1,\dots, q_n\}$ be independent random points with common density $f$. Then, for $\alpha >1$ and $x, y \in \M$ we have  
		\begin{equation*}
		\lim_{n \to \infty} n^\beta D_{Q_n,\alpha} (x, y) = \mu \mathcal D_{f, \beta}(x,y) \quad \text{almost surely.}
		\end{equation*}
Here $\beta = (\alpha-1)/d$ and $\mu$ is a constant depending only on $\alpha$ and $d$; the minimization is carried over all rectifiable curves $\gamma \subset \M$ that start at $x$ and end at $y$. 
\end{thm}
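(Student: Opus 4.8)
The plan is to transport the whole problem from the manifold $\M$ to its flat parameter domain $S\subset\R^d$ through the isometry $\phi\colon\bar S\to\M$, and then invoke Corollary \ref{cor:n.points}. First I would set $\wt q_i=\phi^{-1}(q_i)$, so that $\wt Q_n=\{\wt q_1,\dots,\wt q_n\}$ is an i.i.d.\ sample in $S$; since a Riemannian isometry preserves the volume element, the common density of the $\wt q_i$ is $\wt f=f\circ\phi$, which is continuous, positive and (e.g.\ because $\bar S$ is compact) bounded below, so Theorem \ref{teo:main_theorem_poisson} and hence Corollary \ref{cor:n.points} applies to $(\wt Q_n,\wt f,S)$ and gives $n^\beta D_{\wt Q_n,\alpha}(\wt x,\wt y)\to\mu\,\mathcal D_{\wt f,\beta}(\wt x,\wt y)$ a.s., where $\wt x=\phi^{-1}(x)$, $\wt y=\phi^{-1}(y)$. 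Because $\phi$ sends rectifiable curves in $S$ to rectifiable curves in $\M$ of equal length (the embedding being isometric, the arc length induced by $\R^D$ is the intrinsic arc length) and $\wt f^{-\beta}=f^{-\beta}\circ\phi$, one has $\mathcal T_{\wt f,\beta}(\gamma)=\mathcal T_{f,\beta}(\phi\circ\gamma)$ and therefore $\mathcal D_{\wt f,\beta}(\wt x,\wt y)=\mathcal D_{f,\beta}(x,y)$. So it remains to compare $D_{Q_n,\alpha}(x,y)$, which is defined through the chordal distance of $\R^D$, with the flat quantity $D_{\wt Q_n,\alpha}(\wt x,\wt y)$, defined through the Euclidean distance of $\R^d$.

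These two infima run over the same family of admissible paths, matched by $\phi$; only the length assigned to a single jump differs. The geometric fact I would use is that for nearby points a chordal jump and the corresponding flat jump are comparable: fixing $\delta>0$, compactness together with the $C^1$ regularity of $\M$ (equivalently of $\partial S$) produces an $\eta>0$ such that whenever $p,q\in\M$ satisfy $|p-q|\le\eta$ one has $(1-\delta)|\phi^{-1}p-\phi^{-1}q|\le|p-q|\le(1+\delta)|\phi^{-1}p-\phi^{-1}q|$, because both $|p-q|$ and $|\phi^{-1}p-\phi^{-1}q|$ agree with $d_\M(p,q)=d_S(\phi^{-1}p,\phi^{-1}q)$ (the intrinsic within-$S$ distance) up to a uniform $1+o(1)$ factor. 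To use this I would note that optimal paths have only short jumps: the convergence above makes $n^\beta D_{\wt Q_n,\alpha}(\wt x,\wt y)$ a.s.\ bounded, so $D_{\wt Q_n,\alpha}(\wt x,\wt y)\le c\,n^{-\beta}$ eventually, and since $\beta=(\alpha-1)/d>0$ every jump of an optimal flat path has length at most $(c\,n^{-\beta})^{1/\alpha}\to0$. Pushing that path through $\phi$ and applying the right-hand inequality yields $D_{Q_n,\alpha}(x,y)\le(1+\delta)^\alpha D_{\wt Q_n,\alpha}(\wt x,\wt y)$; this in turn makes $n^\beta D_{Q_n,\alpha}(x,y)$ a.s.\ bounded, so an optimal $\M$-path is likewise eventually made of jumps $\le\eta$, and the left-hand inequality gives $D_{\wt Q_n,\alpha}(\wt x,\wt y)\le(1+\delta)^\alpha D_{Q_n,\alpha}(x,y)$. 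Hence $(1+\delta)^{-\alpha}D_{\wt Q_n,\alpha}(\wt x,\wt y)\le D_{Q_n,\alpha}(x,y)\le(1+\delta)^\alpha D_{\wt Q_n,\alpha}(\wt x,\wt y)$ a.s.\ for large $n$; multiplying by $n^\beta$, sending $n\to\infty$ and then $\delta\downarrow0$ gives the claimed limit $\mu\,\mathcal D_{\wt f,\beta}(\wt x,\wt y)=\mu\,\mathcal D_{f,\beta}(x,y)$.

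The main obstacle, and essentially the only genuinely geometric input, is the uniform chordal-versus-geodesic comparison on $\M$ together with the near-straightness of short geodesics of $S$ at a $C^1$ boundary point: one needs the $1+o(1)$ above to hold uniformly over the region that the relevant paths can visit. This is immediate when $\bar S$ is bounded (so that $\M$ is compact); in general I would confine the near-optimal paths to a fixed compact set, reusing the confinement of geodesics already established in the proof of Theorem \ref{teo:main_theorem_poisson} (and Proposition \ref{prop:k_nn}) and transporting it to $\M$ through $\phi$. Everything else is bookkeeping: the change of variables to $S$, the identification of the two limiting path functionals, and the two-sided jump estimate.
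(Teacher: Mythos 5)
Your argument is correct and follows the paper's own route: transport the sample to $S$ via $\phi^{-1}$, apply the flat result, identify the limiting functionals by the isometry, and compare the chordal and flat discrete distances using the local $1+o(1)$ comparison \eqref{eq:manifold_local}. The one small technical difference is that you control jump sizes by observing that $D_{\wt Q_n,\alpha}\le c\,n^{-\beta}$ forces every jump of an optimal path to have length $O(n^{-\beta/\alpha})$, whereas the paper proves and invokes a separate spacing lemma on the manifold (Lemma \ref{lema:spacing.manifold}); both are adequate for the almost-sure statement, with the paper's version additionally giving the quantitative probability bound that parallels the Euclidean case.
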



\begin{rem}
{Notice that the scaling factor $\beta = (\alpha-1)/d$ depends on the intrinsic dimension of the manifold, instead of the dimension $D$ of the ambient space.}
\end{rem}

\subsection{Geodesics}

Once, we have the convergence of the distances, it is natural to ask for the convergence of the geodesics. An important step towards this result is to prove that sample geodesic arc length is bounded. This result is not straightforward and follows from geometric arguments combined with large deviations.

\begin{ppn}
\label{prop:bounded.arc.length}
Let $S\subset \R^d$ be a bounded connected open set {with $C^1$ boundary}, $Q_n \sim {\rm Poisson}(S, nf)$ and $\delta >0$. Then, there exists positive constants $\ell, \constG, \constF$ and $ n_0$, with $\constF(\delta)$ depending on $\delta$, such that if $x,y \in S$, $|x-y|>\delta$, then for all $n > n_0$
    \begin{equation}
    \mathbb{P} \left( |\overline {r_{Q_n,\alpha}(x,y)}| > \ell \right) \leq \exp \left( -\constF n^{\constG} \right). 
    \label{eq:bounded_path}
    \end{equation}
As a consequence, we have
    \begin{equation*}
    \limsup_{n \to \infty}  |\overline{r_{Q_n,\alpha}(x,y)}| \leq \ell \qquad \text{almost surely}.
    \end{equation*}
\end{ppn}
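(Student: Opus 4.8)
The plan is to work on a high-probability event on which the Poisson configuration is well behaved at all relevant scales, to show first that on this event the passage cost $D_{Q_n,\alpha}(x,y)$ of the geodesic is at most a constant multiple of $n^{-\beta}$, and then to convert that cost bound into an arc length bound by separating the edges of the geodesic into ``long'' and ``short'' ones.

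First fix a mesoscopic scale $\rho_n=n^{-\theta}$ with $0<\theta<1/d$ and tile $\R^d$ by cubes of side $\rho_n$; only $O(\rho_n^{-d})=O(n^{\theta d})$ of them meet the bounded set $\bar S$. Since each such cube carries a Poisson number of points with mean of order $n\rho_n^d=n^{1-\theta d}\to\infty$, a Poisson tail estimate together with a union bound shows that, with probability at least $1-e^{-c n^{1-\theta d}}$, every cube meeting $\bar S$ contains between $\tfrac12 m_f n\rho_n^d$ and $2M_f n\rho_n^d$ points. Into the good event $\mathcal G_n$ I also put the analogous statement at a finer scale (cubes of side a large constant times $n^{-1/d}$, large enough that all of them are nonempty) and the bound ``the total number of points is $O(n)$''. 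On $\mathcal G_n$ one builds, for any $x,y\in S$, an explicit path: since $S$ is open, connected and bounded there is a rectifiable corridor inside $S$ of length $\le L_S$ joining $x$ to $y$, and the $C^1$ (or local convexity) hypothesis on $\partial S$ lets one keep it at mesoscopic distance from $\partial S$; hopping from one populated fine sub-cube to the next along this corridor produces a path with $O(n^{1/d})$ edges of length $O(n^{-1/d})$, hence of cost $O(n^{-\beta})$. Consequently $D_{Q_n,\alpha}(x,y)\le L_0 n^{-\beta}$ on $\mathcal G_n$, and in particular the geodesic $r_{Q_n,\alpha}(x,y)=(q_1,\dots,q_K)$ satisfies $\sum_{j}|q_{j+1}-q_j|^\alpha\le L_0 n^{-\beta}$. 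Uniformity over $x,y$ with $|x-y|>\delta$ is obtained by a net argument, and the constants $\constF,\constG$ are read off from the exponents above; this is the source of the stretched--exponential rate $e^{-\constF n^{\constG}}$ in \eqref{eq:bounded_path}.

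Now fix a threshold $\tau=\varepsilon_0 n^{-1/d}$ for a small constant $\varepsilon_0$ and call an edge $\overline{q_jq_{j+1}}$ of the geodesic \emph{long} if $|q_{j+1}-q_j|\ge\tau$ and \emph{short} otherwise. The long edges are controlled by the cost budget alone: since $t\mapsto t^{1-\alpha}$ is decreasing, $|q_{j+1}-q_j|\le \tau^{1-\alpha}|q_{j+1}-q_j|^\alpha$ on long edges, whence $\sum_{\mathrm{long}}|q_{j+1}-q_j|\le \tau^{1-\alpha}\sum_j|q_{j+1}-q_j|^\alpha\le \varepsilon_0^{1-\alpha}n^{(\alpha-1)/d}L_0 n^{-\beta}=\varepsilon_0^{1-\alpha}L_0$, a constant. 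It remains to bound $\sum_{\mathrm{short}}|q_{j+1}-q_j|$; the crude bound (number of short edges)$\,\times\tau$ only gives $O(n^{1-1/d})$, so one must show that the geodesic uses no more than $O(n^{1/d})$ short edges, equivalently that its total number of edges is $O(n^{1/d})$. This is where geometry and large deviations combine: a maximal run of short edges is a sub-path of the geodesic whose vertices all lie in a small ball, hence (by the occupancy bounds in $\mathcal G_n$) uses only a controlled number of particles; and the minimality of $r_{Q_n,\alpha}(x,y)$ forbids the path from wandering at length through such a ball, because the portion of the geodesic inside it is itself optimal between its endpoints and can be compared with a near-direct local traversal through the populated sub-cubes of the ball, which by strict superadditivity of $t\mapsto t^\alpha$ ($\alpha>1$) would be strictly cheaper if the wandering were too long. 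Carrying this surgery out uniformly over balls at all dyadic scales between $n^{-1/d}$ and $\rho_n$, and summing the resulting per-scale contributions, yields $\sum_{\mathrm{short}}|q_{j+1}-q_j|\le C$. Adding the two parts gives $|\overline{r_{Q_n,\alpha}(x,y)}|\le\ell$ for a constant $\ell$ depending only on $d,\alpha,S,f$ (and possibly $\delta$) on the event $\mathcal G_n$; since $\P(\mathcal G_n^c)\le e^{-\constF n^{\constG}}$ this is \eqref{eq:bounded_path}, and the almost sure statement follows by Borel--Cantelli.

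The main obstacle is precisely this last point: converting the cost budget into a bound on the number of edges, i.e.\ ruling out that the geodesic accumulates arc length through many very short edges. It cannot be done by Hölder's inequality alone (that gives only $O(n)$ edges and hence a divergent arc length bound); it genuinely requires the shortcut/surgery argument above, exploiting simultaneously the optimality of the geodesic and the uniform large--deviation control of the point counts in \emph{all} small balls, and the bookkeeping across the several scales involved (mesoscopic for the distance upper bound, interparticle for the short edges, and the continuum of scales in between for the surgery) is the delicate part.
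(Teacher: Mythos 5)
Your decomposition into long edges (controlled by the cost budget, since for $\alpha>1$ the cost $t^\alpha$ is unfavorable for $t\gtrsim n^{-1/d}$) and short edges is a sensible first step, and the long-edge bound $\sum_{\mathrm{long}}|q_{j+1}-q_j|\le \varepsilon_0^{1-\alpha}L_0$ is correct. But the short-edge part, which you yourself identify as the crux, is not actually proved. What you offer there is a ``surgery'' heuristic: a run of short edges sits in a small ball, the geodesic is locally optimal, so it cannot wander. This is not a proof, and more importantly it rests on a misconception: for $\alpha>1$ short edges are cheap (each costs only $\tau^\alpha$), so minimality of the geodesic does \emph{not} penalize accumulating arc length through many short hops. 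A genuinely cheap detour through short edges is consistent with optimality; nothing in the cost functional forbids it. The constraint has to come from the probabilistic structure of the point set, not from optimality surgery. Also note that a maximal run of short edges need not be confined to a small ball: $N$ edges of length $\lesssim n^{-1/d}$ can span a distance of order $Nn^{-1/d}$.

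The paper's proof takes a different and tighter route, and your remark that ``it cannot be done by H\"older's inequality alone'' is a bit misleading, because H\"older \emph{is} the key algebraic step there. From H\"older one has $|\overline{r_n}|^{\alpha}\le k_n^{\alpha-1}D_{Q_n}(x,y)$; the right quantity to control is therefore not $k_n$ on its own (that would indeed need the circular input $|\overline{r_n}|=O(1)$) but the \emph{ratio} $k_n/(n^{1/d}|\overline{r_n}|)$, which bootstraps cleanly: if this ratio is $O(1)$ and $n^{\beta}D_{Q_n}(x,y)=O(1)$ then $|\overline{r_n}|=O(1)$ follows directly. The ratio bound \eqref{eq:bounded_KN} is proved by a cell-path large-deviations argument at scale $\varepsilon_0 n^{-1/d}$: for \emph{every} lattice cell-path of length $m$ the probability of finding more than $m/(2d)$ Poisson particles in it decays like $e^{-m}$ (Chernoff plus a $(2d)^m$ path count), and on the complementary event a Pigeonhole argument shows that each block of $d$ consecutive empty cells contributes at least $\varepsilon_0 n^{-1/d}$ to the arc length, giving $k_n\le \frac{3}{2\varepsilon_0}n^{1/d}|\overline{r_n}|$. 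This is a statement about the density of particles along the trace of the geodesic, not about what optimality forbids. To repair your proposal you would need to supply this kind of uniform particle-density-along-curves estimate (or an equivalent quantitative ingredient); the surgery sketch as written would fail because it has no leverage against cheap short-edge wandering.
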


{The constant $\constG$ is the same as in Theorem \ref{teo:main_theorem_poisson}.} {Having obtained this upper bound on the arc length of geodesics, we can show that under suitable conditions the microscopic geodesics converge to the macroscopic one.} 

\begin{cor}
\label{cor:convrgence.of.geodesics}
{Let $S\subset \R^d$ be a bounded connected open set and $Q_n \sim {\rm Poisson}(S, nf)$.}
If there is a unique macroscopic $f$-geodesic ${\gamma^\star}$, then $\overline{r_{Q_n,\alpha}(x,y)}$ converges uniformly to ${\gamma^\star}$ {almost surely}.
\end{cor}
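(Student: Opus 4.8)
The plan is to combine the arc-length bound from Proposition~\ref{prop:bounded.arc.length} with the distance convergence of Theorem~\ref{teo:main_theorem_poisson} and a compactness/$\Gamma$-convergence type argument, exploiting the assumed uniqueness of $\gamma^\star$. Fix $x,y\in S$ with $|x-y|>\delta$ for some $\delta>0$. By Proposition~\ref{prop:bounded.arc.length}, almost surely there is $n_0$ and a constant $\ell$ such that for all $n\ge n_0$ the polygonal geodesic $\overline{r_{Q_n,\alpha}(x,y)}$ has arc length at most $\ell$; moreover it lies in a bounded region (one can take it inside $B(\overline{xy},\ell)\cap \bar S$, or argue it cannot wander far once its length is controlled). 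Reparametrize each $\overline{r_{Q_n,\alpha}(x,y)}$ by constant speed on $[0,1]$; then the family is uniformly Lipschitz (constant $\ell$) and uniformly bounded, hence by Arzel\`a--Ascoli it is precompact in $C([0,1],\R^d)$ with the uniform topology. So along any subsequence there is a further subsequence converging uniformly to some rectifiable curve $\gamma_\infty$ from $x$ to $y$, with $|\overline{\gamma_\infty}|\le \ell$ by lower semicontinuity of length.

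The key step is to identify $\gamma_\infty$ with $\gamma^\star$. For the upper bound on the limiting action: plug the fixed macroscopic geodesic $\gamma^\star$ into the sample distance by choosing a near-geodesic polygonal path through points of $Q_n$ that shadows $\gamma^\star$ (this is exactly the construction underlying the upper-bound half of Theorem~\ref{teo:main_theorem_poisson}), giving $\limsup_n n^\beta D_{Q_n,\alpha}(x,y)\le \mu\,\mathcal T_{f,\beta}(\gamma^\star)$, which matches the theorem. For the lower bound: one shows that a polygonal path that shadows $\gamma_\infty$ cannot have $n^\beta$-rescaled cost much below $\mu\,\mathcal T_{f,\beta}(\gamma_\infty)$; more precisely, restricting the FPP estimate of Theorem~\ref{teo:main_theorem_poisson} (or its local/Shape-Theorem refinement in Corollary~\ref{cor:shape.theorem}) to small balls along $\gamma_\infty$ and summing, one gets $\liminf_n n^\beta D_{Q_n,\alpha}(x,y)\ge \mu\,\mathcal T_{f,\beta}(\gamma_\infty)$. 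Combining, $\mu\,\mathcal T_{f,\beta}(\gamma_\infty)\le \mu\,\mathcal D_{f,\beta}(x,y)=\mu\,\mathcal T_{f,\beta}(\gamma^\star)$, so $\gamma_\infty$ is a macroscopic $f$-geodesic; by the assumed uniqueness, $\gamma_\infty=\gamma^\star$ (up to reparametrization). Since every subsequence has a sub-subsequence converging uniformly to the same limit $\gamma^\star$, the whole sequence $\overline{r_{Q_n,\alpha}(x,y)}$ converges uniformly to $\gamma^\star$ almost surely.

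A technical point to handle carefully is matching parametrizations: the polygonal paths and $\gamma^\star$ should be compared as sets or via constant-speed parametrization, and one must make sure the convergence statement is phrased in a parametrization-independent way (e.g. Hausdorff convergence of the images, or uniform convergence after constant-speed reparametrization). Another point is that Proposition~\ref{prop:bounded.arc.length} is stated for $|x-y|>\delta$; for the corollary we just fix $x\neq y$ and take $\delta=|x-y|$, and the degenerate case $x=y$ is trivial. One should also note the vertices $q(x),q(y)$ of the sample path converge to $x,y$ (the nearest point of $Q_n$ to a fixed point tends to that point a.s., with the Poisson intensity $nf$ and $f$ bounded below), so the endpoints are not an issue.

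The main obstacle is the lower-bound half: transferring the \emph{global} distance convergence of Theorem~\ref{teo:main_theorem_poisson} into a \emph{localized} statement that forces the rescaled cost along the limiting curve $\gamma_\infty$ to be at least $\mu\,\mathcal T_{f,\beta}(\gamma_\infty)$. This is where one genuinely needs either the Shape Theorem (Corollary~\ref{cor:shape.theorem}) applied on a covering of $\gamma_\infty$ by small balls with nearly constant $f$, together with additivity of the action along concatenations and control of the (uniformly small) errors from the overlaps and from the boundary of $S$ — all of which must be made uniform in $n$ along the almost sure event where the arc-length and distance estimates hold simultaneously.
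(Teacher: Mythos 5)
Your high-level plan — bound the arc length via Proposition~\ref{prop:bounded.arc.length}, invoke compactness of bounded-length curves, and use uniqueness of $\gamma^\star$ to pin down the limit — is the same skeleton the paper uses, and correctly identifies the hard step. However, the way you propose to close it has a genuine gap, and the route you sketch (subsequence extraction plus a lower bound on the rescaled cost by $\mu\,\mathcal T_{f,\beta}(\gamma_\infty)$) does not actually go through.

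The problem is the intermediate claim $\liminf_n n^\beta D_{Q_n,\alpha}(x,y)\ge \mu\,\mathcal T_{f,\beta}(\gamma_\infty)$. Localizing Theorem~\ref{teo:main_theorem_poisson} to a tube $B(\gamma_\infty,\delta)\cap S$ gives convergence of $n^\beta D_{Q_n\cap B(\gamma_\infty,\delta)}(x,y)$ to $\mu\,\mathcal D_{f,\beta}^{B(\gamma_\infty,\delta)\cap S}(x,y)$, the Fermat distance \emph{restricted to the tube}, and this restricted distance is $\le \mathcal T_{f,\beta}(\gamma_\infty)$, not $\ge$ — the tube minimizer need not be $\gamma_\infty$. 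Likewise, summing the local Fermat distances $\mu\,\mathcal D_{f,\beta}(p_i,p_{i+1})\approx \mu f^{-\beta}(p_i)\,|p_{i+1}-p_i|$ over a chain of points $p_i$ along $\gamma_\infty$ also underestimates $\mathcal T_{f,\beta}(\gamma_\infty)$ whenever $\gamma_\infty$ is not a local shortest path between consecutive $p_i$; you would get back the lower-bound half of Theorem~\ref{teo:main_theorem_poisson} (convergence to $\mu\,\mathcal D_{f,\beta}(x,y)$), nothing more. So the bound you want is either unavailable or assumes what you are trying to prove. The Shape Theorem (Corollary~\ref{cor:shape.theorem}) is also not the right instrument: it controls balls around a single source, not the cost of paths forced through a tube.

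The paper avoids this trap by not trying to lower-bound the cost by $\mathcal T_{f,\beta}(\gamma_\infty)$ at all. Instead it introduces the metric $d_{\mathcal S}$ on parametrized curves (resolving the parametrization issue you flag), notes that the bounded-arc-length sublevel set is $d_{\mathcal S}$-compact, and covers the set of curves at $d_{\mathcal S}$-distance $\ge\varepsilon_4$ from $\gamma^\star$ by finitely many deterministic curves $\gamma^1,\dots,\gamma^m$, each with $\int_{\gamma^i} f^{-\beta}>\int_{\gamma^\star}f^{-\beta}+\varepsilon_5$. For each fixed $\gamma^i$, if $\overline{r_n}$ lies in a small tube around $\gamma^i$ then $D_{Q_n}(x,y)=D_{Q_n\cap B(\gamma^i,\delta_5)}(x,y)$; applying Theorem~\ref{teo:main_theorem_poisson} once to $S$ and once to the tube forces the event probability to decay exponentially, and a union bound plus Borel--Cantelli finishes the proof. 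This finite covering simultaneously handles the issue you raise at the end about "making the estimates uniform along a random curve": by working with a deterministic cover there is no random $\gamma_\infty$ to condition on, and no measurability or uniformity headache. If you want to keep your subsequence-extraction framing, the missing ingredient is precisely the same comparison of the tube-restricted Fermat distance to $\mathcal D_{f,\beta}(x,y)$ — argue by contradiction that if the tube misses $\gamma^\star$ the restricted distance is strictly larger — and you still need a deterministic cover to promote the estimate to an a.s.\ statement; at that point you have essentially rewritten the paper's argument.
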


\subsection{Complexity}
Finally, we turn our attention to the computability of the sample Fermat distance.  
Computing the minimum in \eqref{D_Q} for every two points in $Q_n$ requires a search in a discrete set of size larger than $n!$. By means of Floyd-Warshall algorithm, this task can be done in $\mathcal O(n^3)$ operations. We prove that we can restrict the search to paths in which each particle $q_i$ of the path is a $k-$th nearest neighbor of $q_{i-1}$ for $k \approx \log n$. Based on this fact, {Dijkstra algorithm} requires $ \mathcal O (n^2\log^2 n)$ operations to compute the distances between every two points in the sample.

Given $k \geq 1$ and $q \in Q_n$, the $k$-th nearest neighbor of $q$, denoted by $q^{(k)}$, is defined by
	\begin{equation*}
	q^{(1)} = \argmin_{ q' \in Q_n \setminus \{ q \} } | q' - q |, \qquad
	q^{(k)} = \argmin_{ q' \in Q_n \setminus \{ q , q^{(1)} , \ldots , q^{(k-1)} \} } | q' - q |  \quad \text{for $k > 1$}.	
	\end{equation*}	  
We use the lexicographic order to break ties. Also denote $\mathcal N_k(z) = \{ q^{(1)}, q^{(2)} , \ldots , q^{(k)} \}$ the set of $k$-nearest neighbors of $q$.
We can now define the {\em  restricted} sample Fermat distance as follows:
\begin{defn}
 For $x, y \in Q_n$,  $\alpha \geq 1$ and $k \in \N$, we define
\[
D_{Q_n}^k (x,y) =  \min \left \{ \sum_{i=1}^{K-1} |q_{i+1} - q_{i} |^\alpha \colon  q_{1}=x,  q_{K}=y,\, q_{i+1} \in \mathcal N_k(q_i),\,  1\le i \le K-1 \right\} .
\]
\end{defn}

We have the following quantitative approximation result:
\begin{ppn}\label{prop:k_nn}
In the setting of Theorem \ref{teo:main_theorem_poisson}, given $\varepsilon > 0$, there exist positive constants $\constS, \constT$ such that if $k > \constS \log ( n / \varepsilon ) + \constT $ we have
	\begin{equation*}
	\P \left (D_{Q_n}^{k}(x,y) = D_{Q_n}(x,y) \right ) > 1-\varepsilon.
	\end{equation*}
\end{ppn}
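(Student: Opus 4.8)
The plan is to show that, with probability at least $1-\varepsilon$, every edge appearing in the unrestricted optimal path $r_{Q_n,\alpha}(x,y)$ is in fact a $k$-nearest-neighbor edge, so that the restricted minimization sees the same optimal path; trivially $D_{Q_n}^k(x,y) \ge D_{Q_n}(x,y)$ always, so this gives equality. The key geometric observation is that an edge $\overline{q_i q_{i+1}}$ of a true geodesic cannot be ``too long'': if $|q_{i+1}-q_i|$ is large while many points of $Q_n$ lie in the ball $B(q_i,|q_{i+1}-q_i|)$, then one could replace the single jump $q_i \to q_{i+1}$ by a polygonal detour through those intermediate points, and since $\alpha>1$ the quantity $\sum |\,\cdot\,|^\alpha$ strictly decreases (by convexity / the superadditivity failure of $t\mapsto t^\alpha$ under splitting), contradicting optimality. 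Thus on the event that geodesics have arc length at most $\ell$ (Proposition \ref{prop:bounded.arc.length}), every geodesic edge $\overline{q_i q_{i+1}}$ has at most $k-1$ points of $Q_n$ strictly inside $B(q_i, |q_{i+1}-q_i|)$, i.e. $q_{i+1}\in \mathcal N_k(q_i)$.

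The steps I would carry out, in order. First, fix the ``good'' event $\cG_n = \{|\overline{r_{Q_n,\alpha}(x,y)}|\le \ell\}$; by Proposition \ref{prop:bounded.arc.length}, $\P(\cG_n^c)\le \exp(-\constF n^{\constG})$, which is $\le \varepsilon/2$ for $n$ large, so this contributes negligibly and I will work on $\cG_n$. Second, I would prove the deterministic ``short-edge'' lemma: there is a constant $c$ (depending on $\alpha,d$) such that on $\cG_n$ every edge $\overline{q_iq_{i+1}}$ of the geodesic satisfies
\[
\#\big(Q_n \cap B(q_i, |q_{i+1}-q_i|)\big) \le c\, n \,|q_{i+1}-q_i|^d + O(1),
\]
or rather, combining with optimality, that the edge length is controlled — the cleanest route is: if $B(q_i,|q_{i+1}-q_i|)$ contained $\ge k$ other particles, optimality of the path would be violated, using that replacing a chord of length $r$ by two segments through an interior point of the disk of radius $r$ strictly lowers the $\alpha$-cost since $a^\alpha+b^\alpha < r^\alpha$ whenever $a+b\le 2r$... — more carefully, one wants an interior point close enough that the triangle inequality gives $a+b$ not much more than $r$; this requires the disk to be ``densely filled'', which is exactly what having $\ge k\approx \log n$ points guarantees via a covering argument. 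Third, I would bound the probability that the $k$-nearest-neighbor structure is violated along the geodesic: the geodesic lies in a region of arc length $\le \ell$, hence is covered by $O(\ell/\rho_n)$ balls of radius $\rho_n$ (choosing $\rho_n$ of order $(\log n / n)^{1/d}$), and a union bound over such a covering net combined with a Poisson tail estimate — $\P(\mathrm{Poisson}(\lambda) \le k) \le e^{-\lambda}(e\lambda/k)^k$ when $k < \lambda$ — shows that with probability $\ge 1-\varepsilon/2$ every ball $B(q,\rho_n)$ with $q$ in the relevant region contains many more than $k$ particles, forcing every geodesic edge to be shorter than $\rho_n$ and hence a $k$-NN edge. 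This is where the logarithmic threshold $k > \constS\log(n/\varepsilon)+\constT$ appears: $\rho_n^d \sim k/(n m_f)$ must beat the $O(n^d)$-size union bound, giving $k \gtrsim \log n$.

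The main obstacle I expect is making the ``short-edge'' lemma (the second step) genuinely quantitative and robust: it is not literally true that any disk containing $k$ points admits a cheap detour, because those $k$ points could be clustered near the two endpoints rather than spread across the chord. The fix is to use the density lower bound $m_f>0$ together with the Poisson tail estimate to assert that, with overwhelming probability, not just $B(q_i, r)$ but a thinner tube/corridor around the segment $\overline{q_iq_{i+1}}$ contains a point, and in fact contains a point near its midpoint region — so the detour argument applies to a corridor of width $\sim \rho_n$ around the segment, and one shows any segment of length $> C\rho_n$ can be shortcut through such a corridor point. Thus the interplay is: bounded arc length (Prop.\ \ref{prop:bounded.arc.length}) localizes where we need to control density; the density lower bound plus Poisson concentration fills every $\rho_n$-tube; and convexity of $t\mapsto t^\alpha$ ($\alpha>1$) converts ``tube is filled'' into ``long edges are suboptimal.'' Everything else is a routine union bound and choice of constants.
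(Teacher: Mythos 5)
Your overall plan (restrict attention to the geodesic edges, use the emptiness of the shortcut region $A^\alpha_{q_i,q_{i+1}}$, and a covering-plus-Poisson-tail argument) is in the same spirit as the paper, but the execution has a genuine logical gap, and you also bring in machinery (Proposition~\ref{prop:bounded.arc.length}) that the paper does not need.

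The gap is in your third step. You write that, with high probability, every ball $B(q,\rho_n)$ ``contains many more than $k$ particles, forcing every geodesic edge to be shorter than $\rho_n$ and hence a $k$-NN edge.'' The last implication is backward. What must be shown to conclude $q_{i+1}\in\mathcal N_k(q_i)$ is that $B(q_i,|q_{i+1}-q_i|)$ contains \emph{fewer} than $k$ particles; the lower-tail Poisson estimate you cite (``many more than $k$'') buys you nothing in that direction. Two distinct Poisson bounds at two scales are needed: a lower-tail bound to guarantee that every ball of radius $\delta\rho_n$ is non-empty (so that the shortcut region is non-empty whenever an edge is longer than $\rho_n$, forcing all edges to be shorter than $\rho_n$), and an upper-tail bound to guarantee that every ball of radius $\rho_n$ contains fewer than $k$ particles (so that a short edge is automatically a $k$-NN edge). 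Your account only supplies the first. Relatedly, the obstacle you diagnose (``the $k$ points could be clustered near the endpoints'') is not the central difficulty — clustering is irrelevant once you note that $A^\alpha_{q_i,q_{i+1}}$ contains a ball of radius proportional to the edge length; the real issue is that emptiness of $A^\alpha_{q_i,q_{i+1}}$ does not deterministically bound the count in the larger ball $B(q_i,|q_{i+1}-q_i|)$, and this must be handled probabilistically with the upper tail.

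The paper's proof is organized differently and more economically: it introduces a single random variable $\mathsf s_k$ (the largest radius $s$ for which some ball $B_s$ contains $\ge k$ particles together with an empty sub-ball of radius $\delta s$), so that $\{k^\star\ge k\}\subset\bigcup_{j\ge k}\{\mathsf s_j>0\}$, and then bounds $\P(\mathsf s_k>0)$ by three cases ($\mathsf s_k$ too small, too large, intermediate), each handled with a deterministic grid of size $O(n/k)$ and a Chernoff bound. In particular the paper never invokes Proposition~\ref{prop:bounded.arc.length}; the union bound runs over a grid covering all of $S$, which is already of size $O(n/k)$, so restricting to the arc-length-$\ell$ tube would not change the order of the count (and, incidentally, a covering of that tube by $\rho_n$-balls has size $(\ell/\rho_n)^d$, not $O(\ell/\rho_n)$ as you wrote). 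Your two-scale scheme can be made to work if both tail bounds are carried out, but it ends up reproving a slightly weaker version of the paper's cleaner $\mathsf s_k$ estimate with more moving parts.
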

In other words, with probability at least $1-\varepsilon$, the minimizing path $(q_1, \dots, q_{k_n})$ satisfies  $q_{i+1} \in \mathcal N_{k} (q_{i})$ for every $i = 1, \ldots, {K}-1$.

{While the previous result is certainly an improvement, it might still be unsatisfactory for large data sets.} However, if $n$ is very large, it is possible to appeal to greedy implementations. Given $Q_n$, let us consider a subset {of} {\em landmarks} $\tilde Q \subset Q_n$ with $|\tilde Q| = m$ and $m \ll n$. Then, we compute the minimum path between each of the $m$ landmarks and the rest of the particles in $Q_n$ using Dijkstra's algorithm on the $k$-nearest neighbor graph. This can be done in $\mathcal{O}(m k n \log n)$ operations.  Then, we can bound the exact sample Fermat distance between any two points $q, q' \in Q_n$ by
\[
\max_{\tilde q\in \tilde Q}| D_{Q_n}(q, \tilde q) - D_{Q_n}(q',\tilde q )| \leq D_{Q_n}(q,q') \leq \min_{\tilde q \in \tilde Q} \left( D_{Q_n}(q,\tilde q) + D_{Q_n}(q',\tilde q) \right) ,
\]
see \cite{PBCG}. Notice that the bound from above holds with equality if there is a landmark $\tilde q \in \tilde Q$ in the shortest path between $q$ and $q'$. Due to this fact, an interesting and important problem is to choose a good set of landmarks, \cite{PBCG}. 

\subsection{Organization of the paper}
The rest of the article is organized as follows.
In Section \ref{nonhomogeneous}, we prove several lemmas that lead to the proof of consistency, Theorem \ref{teo:main_theorem_poisson}. Corollary \ref{cor:n.points} can be easily obtained by means of a large deviations principle for Poisson random variables and is left to the reader.
In Section \ref{manifold} we consider the original problem, i.e., the case in which $Q_n$ is a random set of independent points with common density $f$ supported on a manifold and we prove Theorem \ref{teo:manifolds}. 
We then obtain Corollary \ref{cor:convrgence.of.geodesics} as a consequence of Theorem \ref{teo:main_theorem_poisson} after proving that the arc length of microscopic geodesics is bounded, which is done in Section \ref{arclength}. Section \ref{implementation} deals with computational considerations. We show that with large probability $(D_{Q_n}(q, q'))_{q,q' \in Q_n}$ can be computed in $\mathcal O (n^2\log^2 n)$ operations by restricting ourselves to ``local'' paths.

\section{Nonhomogeneous PPP}
\label{nonhomogeneous}

We begin by proving the almost sure convergence of $n^\beta D_{Q_n}(x,y)$ to Fermat distance between $x$ and $y$ for nonhomogeneous PPP stated in Theorem \ref{teo:main_theorem_poisson}. The proof will be split in several lemmas. The first step consists in considering homogeneous PPP in a convex set $S\subset \R^d$. This case has actually been treated in \cite{HN, HN2} where the following is proved.
\begin{ppn}{\cite[Lemma 3 and Lemma 4]{HN}, \cite[Theorem 2.2]{HN2}}
	Assume $S \subset \R^d$ is an open convex set and let $Q_n \sim \text{\rm Poisson}(S,n)$. There exists $0 < \mu < \infty$ such that for any $x, y \in S$ we have
	\begin{equation*}
 	\lim_{n\to \infty} n^{\beta} D_{Q_n} (x, y) = \mu |x - y|, \qquad \text{ almost surely.}
	\end{equation*}
	Moreover, given $\delta > 0$ there exist positive constants $\constD, \constC, \constZ, \constA$, with $\constA$ depending on $\delta$, such that if $| x - y| > \delta$ then 
	\begin{equation*}
	\mathbb{P} \left ( \left | n^\beta D_{Q_n} ( x, y) - \mu | x - y | \right | \geq \constD n^{-1/3d} \right ) 
	\leq 
	\constZ \exp \left( - \constA n^{\constC}  \right).
	\end{equation*}
	for every $n \ge 1$.
	\label{prop:random.number.of.points}
\end{ppn}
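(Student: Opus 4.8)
The plan is to strip the $n$-dependence by rescaling to a unit-intensity Poisson process on $\R^d$, to identify $\mu$ through Kingman's subadditive ergodic theorem, and to derive the exponential estimate from a martingale concentration argument with a polynomial truncation. This is essentially the route of Howard and Newman \cite{HN,HN2}, so I only indicate the steps. Since $n^{1/d}Q_n:=\{n^{1/d}q:q\in Q_n\}\sim\mathrm{Poisson}(n^{1/d}S,1)$ and each summand in \eqref{D_Q} is homogeneous of degree $\alpha$ in the particles, $D_{Q_n,\alpha}(x,y)=n^{-\alpha/d}D_{n^{1/d}Q_n,\alpha}(n^{1/d}x,n^{1/d}y)$, whence with $\beta=(\alpha-1)/d$, $n^{\beta}D_{Q_n,\alpha}(x,y)=n^{-1/d}D_{n^{1/d}Q_n,\alpha}(n^{1/d}x,n^{1/d}y)$. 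Writing $T(u,v)=D_{Q,\alpha}(u,v)$ for $Q\sim\mathrm{Poisson}(\R^d,1)$ and $N=n^{1/d}$, the statement reduces to one about $N^{-1}T(Nx,Ny)$, that is, about the passage time of a single unit-intensity process between two points at Euclidean distance $L:=N|x-y|\asymp n^{1/d}$. (In the convex case one also checks that an almost-optimal whole-space path between $Nx$ and $Ny$ lies inside $n^{1/d}S$: the segment $\overline{xy}$ has a radius-$\rho$ neighbourhood in $S$ for some $\rho>0$, and for $\alpha\ge1$ an almost-minimizing path strays from the chord only by a sublinear amount, by a localization estimate of the same flavour as Proposition \ref{prop:bounded.arc.length}; so this holds for all large $n$ and is irrelevant to the limit.)

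To identify $\mu$: the triangle inequality for $D_{Q,\alpha}$ noted after \eqref{D_Q} makes $(i,j)\mapsto T(ie_1,je_1)$, for integers $i\le j$, a subadditive process that is stationary and ergodic under integer translations along $e_1$, and the two-point path $q(0)\to q(e_1)$ gives $\E[T(0,e_1)]<\infty$ (the distance from a deterministic point to its Voronoi centre has all moments). Kingman's subadditive ergodic theorem then yields $T(0,Le_1)/L\to\mu:=\inf_{m\ge1}m^{-1}\E[T(0,me_1)]\in[0,\infty)$ almost surely and in $L^1$, and rotation invariance of the Poisson process forces the same constant in every direction, so $N^{-1}T(Nx,Ny)\to\mu|x-y|$. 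Positivity $\mu>0$ is the one genuinely delicate point when $\alpha>1$, as many short jumps are a priori cheap: partitioning a nearly minimizing path from $0$ to $Le_1$ by the unit slabs $\{j\le x_1<j+1\}$ it travels Euclidean distance $\ge1$ in each, so the power-mean inequality bounds its cost below by a quantity of order $L^{\alpha}k^{1-\alpha}$ where $k$ is the number of steps, and this is at least $cL$ for some $c>0$ once one knows a nearly optimal path uses only $k=O(L)$ steps (the homogeneous counterpart of Proposition \ref{prop:bounded.arc.length}, using that $Q$ has $O(1)$ points per unit volume). For $\alpha=1$ positivity is immediate since every path has length $\ge|x-y|$.

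The remaining and harder task is the exponential rate: to show $\P(|T(0,Le_1)-\mu L|\ge\kappa L^{2/3})\le C\exp(-cL^{c'})$ for suitable constants, which after $L\asymp n^{1/d}$ is exactly the concentration estimate of the proposition. This needs, first, a sublinear bias bound $0\le\E[T(0,Le_1)]-\mu L=O(L^{2/3})$, from a quantitative, Alexander-type refinement of subadditivity together with an explicit competitor (a chain of Poisson points through $O(L)$ unit cells tiling a bounded tube around the chord, of cost $O(L)$ off an $e^{-cL}$ event); and, second, concentration of $T(0,Le_1)$ about its mean. For the latter one reveals the Poisson configuration cell by cell and forms the Doob martingale of $T$; its increments are not bounded a priori, since a rare empty region can force a long jump, so one works with the passage time $\widehat{T}$ in which jumps of Euclidean length $>L^{\theta}$ (with $\theta>0$ a small constant) are forbidden. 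Then $\widehat{T}=T$ off an event of probability $\le e^{-cL^{\theta d}}$ (an empty ball of radius $L^{\theta}$), $\widehat{T}$ has martingale increments of size $O(L^{\theta\alpha})$, and the increments that matter square-sum to order $L$ up to lower-order factors (in the manner of Kesten's concentration estimates for lattice first-passage percolation), so a martingale inequality of Azuma or Freedman type gives $\P(|\widehat{T}-\E\widehat{T}|\ge t)\le 2\exp(-ct^{2}/L^{1+2\theta\alpha})$. Choosing $t\asymp L^{2/3}$ and optimizing $\theta$ produces the relative error $n^{-1/3d}$ with a sub-exponential rate $\exp(-\constA n^{\constC})$; the precise exponents are artifacts of the argument and are not claimed to be sharp. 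Finally the almost sure convergence $n^{\beta}D_{Q_n,\alpha}(x,y)\to\mu|x-y|$ follows from this estimate by Borel--Cantelli, since $\sum_n\exp(-\constA n^{\constC})<\infty$.

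The main obstacle is plainly the rate: the quantitative bias estimate and, above all, the martingale concentration with its truncation bookkeeping---controlling, in Kesten's manner, how much $T$ moves when one Poisson cell is resampled and bounding the resulting quadratic variation. The identification of the limit (rescaling, subadditive ergodic theorem, isotropy) is by comparison routine, which is why the proposition is imported directly from \cite{HN,HN2} rather than reproved.
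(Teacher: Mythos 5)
Your proposal is correct and takes the same route as the paper: rescale $Q_n\mapsto n^{1/d}Q_n$ to reduce to the unit-intensity whole-space results of Howard--Newman, and control the discrepancy between distances computed in $S$ and in $\R^d$ by a localization estimate (the paper cites \cite[Theorem~2.4]{HN2} for exactly this step). Your additional sketch of the internals of \cite{HN,HN2} (Kingman's theorem for the limit constant, the power-mean bound for $\mu>0$, Kesten-style truncated-martingale concentration with $L^{2/3}$ fluctuations) is accurate, though the paper simply imports those results rather than reproving them.
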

The results of \cite{HN, HN2} are proved in fact for the case in which $Q_n$ is replaced by an intensity one PPP and instead of taking $n\to\infty$, the authors consider the limit as $|y| \to \infty$. The adaptation of those results to our setting to get \ref{prop:random.number.of.points} is straightforward by considering the rescaled process $n^{1/d}Q_n$ and using \cite[Theorem 2.4]{HN2} to show that if we have $\tilde Q_n \sim {\rm Poisson}(S,n)$, $\tilde Q_n \sim {\rm Poisson}(\R^d,n)$ and $x,y \in S$, then
\[
 \mathbb P \left(  D_{Q_n} ( x, y) \ne  D_{\tilde Q_n} ( x, y) \right) \leq \constZ \exp \left( - \constA n^{\constC}  \right). 
\]

By means of Proposition \ref{prop:random.number.of.points} we can obtain rough bounds for the nonhomogeneous case.
\begin{lem}
Let $S\subset \R^d$ be an open bounded connected set with $C^1$ (or empty) boundary and $f\colon S \to [m_f,M_f]$ measurable. Assume $m_f>0$. Let $\delta>0$ and $x, y \subset S$ with $|x - y| > \delta$ 
and $Q_n \sim {\rm Poisson}(S,n f)$. Then, for all $\varepsilon > 0$ there exist $n_0 = n_0(\varepsilon)$ and a positive constant $\constB = \constB(\delta)$ such that for all $n > n_0$,	
	\begin{align}
	\mathbb{P} \bigg ( n^\beta D_{Q_n} (x, y)  \leq \mu M_f^{-\beta} | x - y | - \varepsilon  \bigg  )  & \leq \exp \left( -\constB (m_f n)^{\constC} \right), \label{eq:bounds_fmax} \\
	\mathbb{P} \bigg (  n^\beta D_{Q_n} (x, y)  \geq \mu m_f^{-\beta} \D_0(x,y)  + \varepsilon  \bigg )  & \leq \exp \left( -\constB(M_f n)^{\constC} \right).  \label{eq:bounds_fmin}
	\end{align} 
	\label{lema:bounds}
\end{lem}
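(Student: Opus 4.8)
The plan is to sandwich the nonhomogeneous process $Q_n \sim {\rm Poisson}(S, nf)$ between two homogeneous processes via a coupling argument, and then invoke Proposition \ref{prop:random.number.of.points} for those homogeneous processes. Since $m_f \le f \le M_f$ on $S$, we can realize $Q_n$ together with $Q_n^{\min} \sim {\rm Poisson}(S, n m_f)$ and $Q_n^{\max} \sim {\rm Poisson}(S, n M_f)$ on a common probability space so that $Q_n^{\min} \subseteq Q_n \subseteq Q_n^{\max}$ a.s. (superpose independent PPPs with the appropriate difference intensities $n(f - m_f)$ and $n(M_f - f)$). The point is that $D_{Q,\alpha}$ is monotone decreasing in $Q$: adding particles can only create new admissible paths, hence can only decrease the infimum in \eqref{D_Q}. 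Therefore
\[
D_{Q_n^{\max},\alpha}(x,y) \le D_{Q_n,\alpha}(x,y) \le D_{Q_n^{\min},\alpha}(x,y)
\]
on the coupling event.

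For the lower bound \eqref{eq:bounds_fmax}, I would control $D_{Q_n^{\max},\alpha}(x,y)$. Write $Q_n^{\max} \sim {\rm Poisson}(S, n M_f)$; rescaling the intensity, $n M_f$ plays the role of the sample size parameter, so Proposition \ref{prop:random.number.of.points} (applied with the convex hull or, since $S$ may be non-convex, after extending to a convex set containing $S$ — here one uses that $x,y\in S$ and the bound is only an inequality) gives
\[
\P\Bigl( (nM_f)^\beta D_{Q_n^{\max},\alpha}(x,y) \le \mu|x-y| - \varepsilon' \Bigr) \le \constZ \exp(-\constA (nM_f)^{\constC}).
\]
Multiplying by $(nM_f)^{-\beta} n^\beta = M_f^{-\beta}$ turns this into a statement about $n^\beta D_{Q_n^{\max},\alpha}(x,y)$ versus $\mu M_f^{-\beta}|x-y|$, and since $D_{Q_n,\alpha} \ge D_{Q_n^{\max},\alpha}$, the bound transfers to $Q_n$; absorbing the coupling-failure probability (which is zero, the coupling is exact) and adjusting $\varepsilon'$ versus $\varepsilon$ and the constant $\constB$ versus $\constA$ yields \eqref{eq:bounds_fmax}. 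Symmetrically, for \eqref{eq:bounds_fmin} I would use $D_{Q_n,\alpha}(x,y) \le D_{Q_n^{\min},\alpha}(x,y)$ with $Q_n^{\min}\sim{\rm Poisson}(S, nm_f)$. Here the upper bound involves $\D_0(x,y)$ rather than $|x-y|$ because $S$ need not be convex: the straight segment $\overline{xy}$ may leave $S$, so one must compare with the intrinsic (geodesic-in-$S$, i.e. $\beta=0$ Fermat) distance, along which a tube lies inside $S$ and Proposition \ref{prop:random.number.of.points} can be applied piecewise along a polygonal approximation of the $\D_0$-geodesic, summing the per-segment estimates and using a union bound over the (finitely many) segments.

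The main obstacle is the non-convexity of $S$ in the second inequality: Proposition \ref{prop:random.number.of.points} is stated for convex sets, so I cannot apply it directly to $(x,y)$ in $S$. The fix is to cover a near-optimal $\D_0$-path from $x$ to $y$ by finitely many small balls contained in $S$ (possible since the path is compact and $S$ is open, using the $C^1$ boundary to ensure a uniform interior tube), apply Proposition \ref{prop:random.number.of.points} within each ball to the consecutive waypoints, and concatenate; the number of balls is a constant depending only on the path (hence on $x,y,S$), so the union bound costs only a constant factor and does not affect the form $\exp(-\constB(M_f n)^{\constC})$. A minor secondary point is bookkeeping the $n^{-1/3d}$ error term from Proposition \ref{prop:random.number.of.points}: for $n \ge n_0(\varepsilon)$ this is smaller than $\varepsilon$ (after splitting $\varepsilon$ among the finitely many segments), which is exactly why the statement quantifies over $n > n_0(\varepsilon)$ rather than all $n$.
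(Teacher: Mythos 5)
Your proposal follows essentially the same route as the paper: sandwich $Q_n$ by a monotone coupling with homogeneous Poisson processes, transfer both tails to the homogeneous setting via the monotonicity $Q\subset\tilde Q \Rightarrow D_{\tilde Q}\le D_Q$, invoke Proposition~\ref{prop:random.number.of.points}, and, for the upper tail \eqref{eq:bounds_fmin}, chop a near-optimal $\D_0$-path into finitely many pieces that sit inside convex balls contained in $S$ and take a union bound. The paper does exactly this (it couples $Q_n^-\sim{\rm Poisson}(S,nm_f)$ and $Q_n^+\sim{\rm Poisson}({\rm co}(S),nM_f)$, with $Q_n^-\subset Q_n\subset Q_n^+$, and uses a polygonal $\overline{(x_0,\dots,x_k)}\subset S$ whose doubled segment-balls lie in $S$).

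Two small points where the paper is tighter than your sketch. First, for \eqref{eq:bounds_fmax} you couple only with ${\rm Poisson}(S, nM_f)$ and then gesture at ``extending to a convex set''; the paper avoids this by coupling directly with $Q_n^+$ supported on the convex hull ${\rm co}(S)$, so Proposition~\ref{prop:random.number.of.points} applies without further ado and only the $\supset$ direction of the coupling inequality is used. Second, and more substantively, the constants $\constB=\constB(\delta)$ and $n_0=n_0(\varepsilon)$ in the statement (and in the later uses of this lemma, e.g.\ in Lemma~\ref{lemma:entornos}, where they must be independent of the auxiliary point $z$) require the number of polygonal segments to be bounded \emph{uniformly} over $x,y\in S$. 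Your argument only gives a segment count ``depending on the path (hence on $x,y,S$)'', which would let $n_0$ or $\constB$ leak a dependence on $x,y$. The paper closes this gap with a short compactness argument using that $\bar S$ is compact and $\partial S$ is $C^1$: one shows by contradiction that any two points of $S$ can be joined by a polygonal in $S$ with a number of segments bounded by a constant depending only on $S$. You should add that step to make the union-bound constant genuinely uniform.
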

Here $\D_0(x,y):=\D_{f,0}(x,y)$ is the geodesic distance between $x$ and $y$ defined in accordance with \eqref{Fermat.Distance}.
	
\begin{proof}
Denote ${\rm co}(S)$ the convex hull of $S$. Given two locally finite configurations $Q \subset \tilde Q$, we have $ D_{\tilde Q}(x, y) \leq D_{Q}(x,y)$. Enlarge the probability space to consider two homogeneous PPP $Q_n^-\sim{\rm Poisson}(S,nm_f)$ and $Q_n^+\sim {\rm Poisson}({\rm co}(S),nM_f)$, coupled with $Q_n$ {(see for instance \cite[Section 3.2.2]{moller})} to guarantee that $Q_n^- \subset Q_n \subset Q_n^+$. Then
	\begin{align*}
	\mathbb{P} \bigg ( n^\beta D_{Q_n} (x, y) & \leq \mu M_f^{-\beta} | x - y | - \varepsilon  \bigg  ) 
	 \leq
	\mathbb{P} \bigg ( n^\beta D_{Q_n^+} (x, y)  \leq \mu M_f^{-\beta} | x - y | - \varepsilon  \bigg  )
	\end{align*}		
	Choosing  $n_0$ such that $\varepsilon > \constD (n_0m_f)^{-1/3d}$, by means of Proposition \ref{prop:random.number.of.points} we get\eqref{eq:bounds_fmax}. To prove \eqref{eq:bounds_fmin} we proceed similarly, but we need to be more careful. Since $S$ is open and connected, we can consider a polygonal $\gamma = \overline{(x_0, \dots, x_k)} \subset S$ from $x$ to $y$ with 
\[
|\overline{(x_0, \dots, x_k)}| < \D_0(x,y) + \frac{m_f^\beta \varepsilon}{2\mu} \quad \text{ and } \quad B\left (\frac{x_{i+1}+x_i}{2},{|x_{i+1}-x_i|}\right) \subset S,
\]
for every $0\le i \le k-1$. 
{We claim that $k$ can be taken uniformly bounded for every two points $x,y\in S$. To see that, we proceed by contradiction. Fix one point $z \in S$ and assume there is a sequence of points $z_n \in S$ with the property that any polygonal from $z$ to $z_n$ contained in $S$ is composed by at least $n$ line segments. Since $\bar S$ si compact we can extract a convergent subsequence $z_{n_j} \to z^\star \in \bar S$. If $z^\star \in S$, there is ball centered at $z^\star$ contained in $S$ with points $z_{n}$ for large $n$. Then we can easily construct polygonals contained in $S$ from $z$ to $z_n$ with a bounded number of line segments, a contradiction. Then it should hold that $z^\star \in \partial S$ but since $\partial S$ is $C^1$ we can proceed in the same way to obtain again a contradiction.}

Denote 
\[
Q_{n}^i=Q_n^- \cap  B\left (\frac{x_{i+1}+x_i}{2},{|x_{i+1}-x_i|}\right).
\]
Proceeding as before, we get for every $i$,
\begin{equation*}
 	\mathbb{P} \bigg ( n^\beta D_{Q_n} (x_{i},x_{i+1} )  \geq \mu m_f^{-\beta} | x_{i+1} - x_i | + \varepsilon  \bigg )
	 \leq 
	\mathbb{P} \bigg ( n^\beta D_{Q_n^i} (x_i, x_{i+1})  \geq \mu m_f^{-\beta} | x_{i+1} - x_i | + \varepsilon  \bigg ).
\end{equation*}
Then, 
\begin{align*}
 	\mathbb{P} \bigg ( n^\beta D_{Q_n} (x,y)  \geq \mu m_f^{-\beta} \D_0(x,y) + \varepsilon  \bigg ) & \leq  
 	\mathbb{P} \bigg ( n^\beta \sum_{i=0}^k D_{Q_n} (x_{i},x_{i+1} )  \geq \frac{\mu}{m_f^{\beta}} |\overline{(x_0, \dots, x_k)}| + \frac{\varepsilon}{2}  \bigg )\\
 	& \le \sum_{i=0}^k \mathbb{P} \bigg ( n^\beta D_{Q_n^i} (x_i, x_{i+1})  \geq \mu m_f^{-\beta} | x_{i+1} - x_i | + \frac{\varepsilon}{2k}  \bigg ).
\end{align*}
Using again Proposition \ref{prop:random.number.of.points} we get \eqref{eq:bounds_fmin}.	
\end{proof}

The second step is to show that the distance between consecutive particles in the optimal path vanishes as $n\to \infty$.

	\begin{lem}
	In the setting of Theorem \ref{teo:main_theorem_poisson}, let $(q_1, \dots, q_{k_n})$ be the minimizing path. Given $\delta > 0$, there exists a positive constant $\constN$ such that 
	\begin{equation*}
	\mathbb P \bigg ( \max_{i<k_n} | q_{i} - q_{i+1} | > \delta   \bigg ) \leq   \exp \left( - \constN n \right ).
	\end{equation*}	
	\label{lema:spacing}
	\end{lem}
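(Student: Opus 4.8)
The plan is to argue by contradiction: if some consecutive pair $q_i, q_{i+1}$ in the minimizing path is at distance greater than $\delta$, then the cost of that single jump, $|q_i - q_{i+1}|^\alpha \ge \delta^\alpha$, is already so large (relative to the scaling $n^{-\beta}$) that it would force $n^\beta D_{Q_n}(x,y)$ to exceed the upper bound provided by Lemma \ref{lema:bounds}. So the event in question is contained in the bad event $\{n^\beta D_{Q_n}(x,y) \ge \mu m_f^{-\beta}\mathcal D_0(x,y) + \varepsilon\}$ of \eqref{eq:bounds_fmin} for a suitable choice of $\varepsilon$, \emph{provided} the jump is not too cheap after scaling. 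The subtlety is that $|q_i - q_{i+1}|^\alpha$ enters $D_{Q_n}$ \emph{unscaled}, while the quantity we control is $n^\beta D_{Q_n}$; since $\beta = (\alpha-1)/d > 0$, we have $n^\beta \delta^\alpha \to \infty$, so for $n$ large a single jump of Euclidean length $> \delta$ contributes $n^\beta |q_i-q_{i+1}|^\alpha > n^\beta \delta^\alpha$, which eventually dominates $\mu m_f^{-\beta}\mathcal D_0(x,y) + \varepsilon$ (the latter being a fixed constant independent of $n$).

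Concretely, I would proceed as follows. First fix $\varepsilon := 1$ (or any convenient constant) and let $n_0$ and $\constB = \constB(\delta')$ be the constants from Lemma \ref{lema:bounds} applied to the points $x,y$ (with some fixed $\delta' = |x-y|$, assumed positive as in the theorem's hypotheses on deterministic $x,y$). Next, observe that on the event $A_n := \{\max_{i<k_n}|q_i - q_{i+1}| > \delta\}$ we have, using $D_{Q_n}(x,y) = \sum_{j=1}^{k_n-1}|q_{j+1}-q_j|^\alpha \ge \max_{i<k_n}|q_i-q_{i+1}|^\alpha > \delta^\alpha$, that
\[
n^\beta D_{Q_n}(x,y) > n^\beta \delta^\alpha.
\]
Since $\beta>0$, choose $n_1 \ge n_0$ so large that $n^\beta \delta^\alpha > \mu m_f^{-\beta}\mathcal D_0(x,y) + 1$ for all $n \ge n_1$; note $\mathcal D_0(x,y) \le \diam(S)$ or can be bounded by any fixed polygonal path length in $S$, so this is a finite threshold. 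Then for $n \ge n_1$,
\[
A_n \subset \Big\{ n^\beta D_{Q_n}(x,y) \ge \mu m_f^{-\beta}\mathcal D_0(x,y) + 1 \Big\},
\]
and \eqref{eq:bounds_fmin} gives $\P(A_n) \le \exp(-\constB (M_f n)^{\constC})$. To match the stated bound $\exp(-\constN n)$, I would either absorb the polynomial rate $n^{\constC}$ (with $\constC \le 1$, as is typical for these FPP large deviations) into a linear rate at the cost of enlarging $n_1$ and shrinking the constant, or simply restate the lemma with the rate $\exp(-\constN n^{\constC})$; since $\constC \le 1$ one has $n^{\constC} \ge c\, n$ fails in general, so the cleaner route is to keep $n^{\constC}$ in the exponent and note that $\constN n$ is what one gets if $\constC=1$; alternatively one can note that for the \emph{existence} of a jump the relevant estimate is really a volume/counting bound. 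Actually the sharper and more natural argument replacing the last step: the probability that the minimizing path must make a jump of length $>\delta$ is bounded by the probability that some Euclidean ball of radius comparable to $\delta$ in $S$ contains no particle of $Q_n$ (forcing a detour cheaper than a long jump is impossible otherwise), and $\P(\text{a fixed ball } B \text{ has } Q_n \cap B = \varnothing) = e^{-n\int_B f} \le e^{-n m_f \vol(B)}$; covering the relevant region of $S$ by $O(1)$ such balls and using a union bound yields exactly $\exp(-\constN n)$ with $\constN$ proportional to $m_f \delta^d$.

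The main obstacle is making the "forced detour" reasoning rigorous in the second approach: one must show that if every ball of radius $\rho \sim \delta$ meeting the segment between two consecutive optimal particles contains a particle, then the path can be rerouted through those particles at strictly lower $\alpha$-cost, contradicting optimality — this requires the elementary inequality $(a+b)^\alpha < \text{(longer single jump)}^\alpha$ type estimate showing that subdividing a long segment into many short ones strictly decreases $\sum |\cdot|^\alpha$ when $\alpha > 1$ and the pieces stay within $S$ (which is where local convexity / the geometry controlled in Lemma \ref{lema:bounds} is used, via the balls $B\big(\tfrac{x_{i+1}+x_i}{2}, |x_{i+1}-x_i|\big) \subset S$). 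Either way — via Lemma \ref{lema:bounds} directly, or via the empty-ball estimate — the conclusion $\limsup$-statement is not needed separately here (it will follow in later use), so I would present whichever of the two is shortest; I expect the direct deduction from \eqref{eq:bounds_fmin} to be the cleanest to write, with the empty-ball argument as the intuition. Finally, Borel--Cantelli would give the a.s. statement $\max_{i<k_n}|q_i-q_{i+1}| \to 0$ if one wanted it, by letting $\delta \downarrow 0$ along a sequence.
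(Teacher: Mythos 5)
Your second approach (the empty-region argument) is essentially what the paper does, and you correctly diagnose that your first approach would not give the stated rate. Two minor comments. First, the paper's version of the ``forced detour'' observation is sharper and cleaner than your phrasing: minimality of $(q_1,\dots,q_{k_n})$ forces, for every $i$, that
\[
Q_n \cap \bigl\{ z \in S : |z-q_i|^\alpha + |z-q_{i+1}|^\alpha < |q_i-q_{i+1}|^\alpha \bigr\} = \emptyset,
\]
since any particle $z$ in this lens-shaped set would strictly reduce the cost by routing $q_i \to z \to q_{i+1}$. The paper then notes this region contains a cube of edge length $\kappa_1|q_i-q_{i+1}|$ (with $\kappa_1$ depending only on $\alpha,d$), covers $S$ by a fixed grid of cubes of side $\kappa_1\delta/2$, and applies a union bound over the finitely many grid cubes meeting $S$: each such cube is empty with probability at most $\exp(-\constM n)$. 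This is exactly your ``empty ball'' route, with cubes in place of balls and the rerouting step made precise via the explicit lens set rather than a generic $(a+b)^\alpha$-type inequality. Second, your remark that you need ``$O(1)$ balls'' is slightly off --- the number of covering cubes depends on $\delta$ and $\operatorname{diam}(S)$, but what matters is only that it is finite and independent of $n$, which is what the union bound needs. Regarding your first approach via Lemma \ref{lema:bounds}: you are right that it only yields $\exp(-c\, n^{\constC})$ with $\constC < 1$, so it cannot produce the claimed $\exp(-\constN n)$; dropping it in favor of the direct empty-region estimate was the correct call.
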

	
	\begin{proof}
	For any two consecutive points $q_i, q_{i+1}$ in the optimal path we have 
	\begin{equation*}
	Q_n \cap \left \{ z \in S : |z - q_{i+1} |^\alpha + | z - q_i |^\alpha < | q_{i+1} - q_i |^\alpha \right \} = \emptyset.
	\end{equation*}
	Observe that we can choose $\kappa_1$ depending only on $\alpha$ and $d$ such that the region 
	$\{ z \in S : |z - q_{i+1} |^\alpha + | z - q_i |^\alpha < | q_{i+1} - q_i |^\alpha \}$
	contains a cube of edge size $\kappa_1|q_{i+1} - q_i|$. Consider a family $\mathcal C$ of cubes of edge size $\kappa_1 \delta/2$ with vertices in $( \kappa_1 \delta/2) \mathbb Z^d$.
	
	 Notice that the number of cubes in this family that intersect $S$ is finite.
	 Each of these cubes has no particles with probability bounded by { $\exp(-\constM n)$}. If $\max_{i<k_n} | q_{i} - q_{i+1} | > \delta$, then there is a cube in $S$ with side $\kappa_1 \delta$. Such a cube must contain a cube in $\mathcal C$.
	\end{proof}
	Next, we prove that in order to find the optimal path between $x$ and $y$ we can restrict ourselves to certain neighborhoods of any path $\gamma_{xy} \subset S$ that starts at $x$ and ends at $y$. This fact will be used both for points that are close to each other as well as for points that are at a large distance. Denote, $m_f^{\gamma}= \inf\{f(z)\colon z \in B(x, 2|\gamma|) \}$.
	\begin{lem}
	In the setting of Theorem \ref{teo:main_theorem_poisson}, given $\delta >0$, there exist positive constants $\constL$ and $n_0$ such that for every $x,y \in S$ with $|x-y|>\delta$ and a path $\gamma \subset S$ from $x$ to $y$ we have, 
	\begin{equation}
	\mathbb{P} \bigg ( D_{Q_n}(x, y) \neq D_{Q_n \cap B(x,\tilde a|\gamma|)} (x, y)  \bigg ) \leq  \exp \left( - \constL n^{\constC} \right),
	\label{eq:entornos_geo}
	\end{equation}
for every $n > n_0$ and $\tilde a=3 \left({M_f}/{m_f^{\gamma}} \right)^\beta$. In particular,
\begin{enumerate}
 \item[(i)] if $S$ is bounded 
 	\begin{equation*}
	\mathbb{P} \bigg ( D_{Q_n}(x, y) \neq D_{Q_n \cap B(x,a|\overline{xy}|)} (x, y)  \bigg ) \leq \exp \left( - \constL n^{\constC} \right).
	\end{equation*}
	with
	$$ a = \tilde a \sup_{| z - w | \geq \delta } \frac{\D_0(z,w)}{|z - w|} < \infty. $$

 \item[(ii)] if $\overline{xy}\subset S$, we have
	\begin{equation*}
	\mathbb{P} \bigg ( D_{Q_n}(x, y) \neq D_{Q_n \cap B(x,\tilde a|\overline{xy}|)} (x, y)  \bigg ) \leq \exp \left( - \constL n^{\constC} \right).
         \end{equation*}
\end{enumerate}
	\label{lemma:entornos}
	\end{lem}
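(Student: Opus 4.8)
The plan is to show that with overwhelming probability any path attaining (or nearly attaining) $D_{Q_n}(x,y)$ cannot stray far from $x$, because a path leaving the ball $B(x,\tilde a|\gamma|)$ would have Euclidean length at least $\tilde a|\gamma|$, hence $D_{Q_n}$-cost at least roughly $\mu\,(m_f^\gamma)^{-\beta} n^{-\beta}$ times a quantity comparable to $\tilde a|\gamma|$ (using a lower bound coming from a homogeneous PPP of intensity $n m_f^\gamma$ inside $B(x,2|\gamma|)$, together with superadditivity of $D_{Q_n}$ along the sub-path), whereas the competitor path $\gamma$ itself has cost at most roughly $\mu\, M_f^{-\beta} n^{-\beta}|\gamma|$ (using the upper bound from a homogeneous PPP of intensity $nM_f$). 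With $\tilde a = 3(M_f/m_f^\gamma)^\beta$ the first quantity strictly exceeds the second, so the minimizing path must remain in $B(x,\tilde a|\gamma|)$, which is exactly the asserted identity of distances. The error probabilities are controlled by Lemma \ref{lema:bounds} and Proposition \ref{prop:random.number.of.points}, applied to a fixed finite family of scales, producing the $\exp(-\constL n^{\constC})$ bound.

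In more detail, I would proceed as follows. First, fix a path $\gamma\subset S$ from $x$ to $y$ with $|x-y|>\delta$ and set $R=\tilde a|\gamma|$. Since $D_{Q_n\cap B(x,R)}(x,y)\ge D_{Q_n}(x,y)$ always, it suffices to prove the reverse inequality holds with high probability; and this fails only if every minimizing (or $\varepsilon_n$-minimizing) path exits $B(x,R)$. Suppose a competitor path $(q_1,\dots,q_K)$ from $x$ to $y$ leaves $B(x,R)$; let $j$ be the first index with $q_j\notin B(x,R)$ and $q_{j-1}\in B(x,2|\gamma|)$ — by Lemma \ref{lema:spacing} we may assume all consecutive jumps are at most $\delta\le|\gamma|$, so the path does pass through the annulus $B(x,2|\gamma|)\setminus B(x,|\gamma|)$ before leaving $B(x,R)$, and in particular some point $w$ of the path lies on the sphere $\partial B(x,|\gamma|)$. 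Then the initial segment of the path from $x$ to $w$ has $D_{Q_n}$-cost at least $D_{Q_n\cap B(x,2|\gamma|)}(x,w)$, which by the coupling with $Q_n^-\sim{\rm Poisson}(B(x,2|\gamma|), n m_f^\gamma)$ and Lemma \ref{lema:bounds}/Proposition \ref{prop:random.number.of.points} is at least $\mu (m_f^\gamma)^{-\beta} n^{-\beta}|x-w| - \varepsilon_n = \mu (m_f^\gamma)^{-\beta} n^{-\beta}|\gamma| - \varepsilon_n$, up to an event of probability $\exp(-\constL n^{\constC})$. On the other hand, the path $\gamma$ itself, discretized through $Q_n$, yields $D_{Q_n}(x,y)\le \mu M_f^{-\beta} n^{-\beta}|\gamma| + \varepsilon_n$ off a comparable exceptional event (here one subdivides $\gamma$ into finitely many short segments, as in the proof of Lemma \ref{lema:bounds}, the number of segments being uniformly bounded thanks to the compactness/$C^1$ argument already given). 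Since $\tilde a=3(M_f/m_f^\gamma)^\beta$, for $n$ large the lower bound $\mu(m_f^\gamma)^{-\beta}n^{-\beta}\tilde a|\gamma|$ on the cost of the exiting part exceeds $\mu M_f^{-\beta}n^{-\beta}|\gamma|$ plus room for the error terms, a contradiction; hence the minimizing path stays inside $B(x,R)$, proving \eqref{eq:entornos_geo}.

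For the two particular cases: in (i) with $S$ bounded, take $\gamma=\overline{xy}$ when it lies in $S$, and in general bound $|\gamma|\le \D_0(x,y)\le \big(\sup_{|z-w|\ge\delta}\D_0(z,w)/|z-w|\big)\,|\overline{xy}|$, with the supremum finite because $S$ is bounded, open and connected with $C^1$ boundary (so geodesic distance is bounded on the compact set $\{|z-w|\ge\delta\}$) — this is exactly the constant $a$; moreover $m_f^\gamma\ge m_f$ makes $\tilde a$ uniform, absorbing it into $a$. For (ii), when $\overline{xy}\subset S$ take $\gamma=\overline{xy}$ directly, so $|\gamma|=|\overline{xy}|$ and the statement follows verbatim from \eqref{eq:entornos_geo}.

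The main obstacle I expect is making the ``exiting path costs a lot'' step fully rigorous and uniform: one must show that the portion of \emph{any} competitor path lying in the annulus $B(x,2|\gamma|)\setminus B(x,|\gamma|)$ can be compared to the homogeneous-PPP distance $D_{Q_n^-}(x,w)$ for a point $w$ on $\partial B(x,|\gamma|)$, which requires (a) controlling that the path actually reaches such a $w$ without jumping over the annulus — handled by Lemma \ref{lema:spacing} — and (b) invoking the lower bound of Lemma \ref{lema:bounds} uniformly over the (uncountably many) possible landing points $w$ and over all $\gamma$; the uniformity is obtained by a covering/net argument on the sphere together with the explicit stretched-exponential rates, exactly as the scale $n^{-1/3d}$ buffer in Proposition \ref{prop:random.number.of.points} is designed to permit.
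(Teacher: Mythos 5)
Your high-level strategy matches the paper's: show that any candidate minimizing path exiting $B(x,\tilde a|\gamma|)$ would be too expensive, by pitting a lower bound on the cost of the exit against an upper bound on the cost along $\gamma$. But as written, two concrete errors prevent the contradiction from closing. First, the coupling directions are reversed. Since $Q_n^-\sim{\rm Poisson}(B(x,2|\gamma|),nm_f^\gamma)$ can be coupled so that $Q_n^-\subset Q_n\cap B(x,2|\gamma|)$, we have $D_{Q_n\cap B(x,2|\gamma|)}\le D_{Q_n^-}$: the sparse PPP yields an \emph{upper} bound of order $\mu(m_f^\gamma)^{-\beta}n^{-\beta}|x-y|$, not the lower bound you claim. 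Dually, $Q_n\subset Q_n^+\sim{\rm Poisson}(\cdot,nM_f)$ gives the \emph{lower} bound $D_{Q_n}\ge D_{Q_n^+}$ of order $\mu M_f^{-\beta}n^{-\beta}|x-z|$. Second, and more seriously, the sphere $\partial B(x,|\gamma|)$ is too small. With the couplings corrected, the cost to reach $\partial B(x,|\gamma|)$ is bounded below by roughly $\mu M_f^{-\beta}n^{-\beta}|\gamma|$, while $D_{Q_n}(x,y)$ is bounded above by roughly $\mu(m_f^\gamma)^{-\beta}n^{-\beta}|\gamma|$; since $M_f^{-\beta}\le(m_f^\gamma)^{-\beta}$ these two are perfectly consistent and no contradiction follows. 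You must locate a particle $q$ of the minimizing path \emph{outside} $B(x,\tilde a|\gamma|)$ — as the paper does, taking the first such $q$, which lies in $B(x,2\tilde a|\gamma|)\setminus B(x,\tilde a|\gamma|)$ once small jumps are guaranteed by Lemma~\ref{lema:spacing}. Then $|x-q|>\tilde a|\gamma|$ and the lower bound becomes $\mu M_f^{-\beta}n^{-\beta}\tilde a|\gamma|=3\mu(m_f^\gamma)^{-\beta}n^{-\beta}|\gamma|$, strictly exceeding the upper bound. The $\tilde a$ in your final displayed inequality appears without justification: your own computation gave $|x-w|=|\gamma|$, not $\tilde a|\gamma|$.

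The uniformity step that you only gesture at is also a real part of the argument. The paper covers $S\cap\bigl(B(x,2\tilde a|\gamma|)\setminus B(x,\tilde a|\gamma|)\bigr)$ by $O(n)$ balls of radius $\delta_0 n^{-1/d}$ with centers $\mathcal V$, and moves from a random exit particle $q$ to the nearest net point $v_q$ via the triangle inequality at cost at most $(2\delta_0 n^{-1/d})^\alpha$, converting an event indexed by the (random, uncountably many possible) particle $q$ into a union bound over the finite set $\mathcal V$. Identifying this mechanism — the events $A_n^z$, $E_n^z$, $F_n$ indexed by net points $z\in\mathcal V$, with $A_n^z\cap E_n^z\cap F_n=\emptyset$ — is what produces the stretched-exponential bound with the polynomial prefactor $\kappa_2 n$, and it is not captured by invoking the $n^{-1/3d}$ scale in Proposition~\ref{prop:random.number.of.points}.
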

	\begin{proof}
	Let $z \not \in B(x,a|\gamma|)\cap S$. Given $\delta_1 < \mu (m^{\gamma}_f)^{-\beta} |\gamma| / 3$, consider the events 
	\begin{align*}
	A_n^z &= \bigg \{ n^\beta D_{Q_n} (x, z) \leq
	n^\beta D_{Q_n} (x, y) + \delta_1 \bigg \} \\
	E_n^z &= \bigg \{ n^\beta D_{Q_n}(x, z) \geq  \mu M_f^{-\beta} | x - z | - \delta_1  \bigg \}  \\
	F_n &= \bigg \{ n^\beta D_{Q_n \cap B(x, 2 |\gamma|)}(x, y) \leq \mu (m_f^{\gamma})^{-\beta} |\gamma| + \delta_1 \bigg \}.	
	\end{align*}
	In $A_n^z \cap E_n^z \cap F_n$ we get
	
	\begin{align*}
	\mu M_f^{-\beta} | x - z | 
	& \leq \, 
	n^\beta D_{Q_n} (x, z) + \delta_1 
	 \leq \, 
	n^\beta D_{Q_n} (x , y) + 2 \delta_1 \\
	& \leq \,
	n^\beta D_{Q_n \cap B(x, 2 |\gamma|)} (x , y) + 2 \delta_1
	 \leq \,
	\mu (m_f^{\gamma})^{-\beta} |\gamma| + 3 \delta_1\\ 
	& < \, 
	2 \mu (m_f^{\gamma})^{-\beta} |\gamma|.
	\end{align*}
Since $z \not \in B(x,a|\gamma|)$ implies $| x - z | > a |\gamma|$ and $a=3 \left({M_f}/{m_f^{\gamma}} \right)^\beta$,
	we have 
	{$A_n^z \cap E_n^z \cap F_n = \emptyset$}.
	By Lemma \ref{lema:bounds} there exist $\constB(\delta)$, $n_0(\delta)$ independent of $z$ and a positive constant $\constC$ such that 
	\begin{equation*}
	\mathbb{P} (A_n^z) \leq \mathbb{P}((E_n^z)^c) + \mathbb{P}(F_n^c) \leq 2 \exp \left(  -\constB (m_f n)^{\constC} \right)  \quad \text{for all } n > n_0. 
	\end{equation*}	
	Assume $D_{Q_n}(x, y) < D_{Q_n \cap B(x,a|\gamma|)}(x, y)$ and $\{ \max_{i<k_n} | q_{i} - q_{i+1} | < a |\gamma| \}$. Then there is a particle $q \in Q_n \cap B(x,a|\gamma|)^c \cap B(x,2a|\gamma|)$ with
	\begin{equation*}
	D_{Q_n}(x, y) = D_{Q_n} (x, q) + D_{Q_n} (q , y) \geq D_{Q_n} (x, q).
	\end{equation*}
        Consider the following covering
    	\begin{equation*}
	S \cap \left(B(x,2a|\gamma|) \smallsetminus	B(x,a|\gamma|) \right) 
	\subset 
	\bigcup_{v \in \mathcal V} B \left( v, \delta_0 n^{-1/d} \right).
	\end{equation*}
	Here $\mathcal V \subset S \smallsetminus B(x,a|\gamma|)$ is a finite set of points that can be chosen in such a way that $\#\mathcal V \le \kappa_2 n$ for some constant $\kappa_2$ and {$(2\delta_0)^\alpha < \delta_1$}.
	 Let $v_{q} \in \mathcal V$ be such that $q \in B \left( v_{q} , \delta_0 n^{-1/d} \right)$. If $q$ is the closest particle in $Q_n$ to $v_q$, then $D_{Q_n}(q, v_q)=0$. If that is not the case, there is another particle in $B \left( v_{q} , \delta_0 n^{-1/d} \right)$ and consequently we have $D_{Q_n}(q, v_q) < (2 \delta_0 n^{-1/d})^\alpha$.
	From triangular inequality we get
	\begin{equation*}
	n^\beta D_{Q_n} (x, q) \geq
	n^\beta D_{Q_n} (x , {v_q}) - n^\beta D_{Q_n} (q , {v_q}) \geq 
	n^\beta D_{Q_n} (x , {v_q}) - \delta_1n^{-\alpha/d} 		\geq 
	n^\beta D_{Q_n} (x , {v_q}) - \delta_1.
	  \end{equation*}		
	Hence
	\begin{align*}
	\mathbb{P} \bigg ( D_{Q_n}(x, y) \neq &D_{Q_n \cap B(x,a|\gamma|)} (x, y) \, , \, {\max_{i<k_n} | q_{i} - q_{i+1} | < a |\gamma|}  \bigg )\\
	&\leq  
	\mathbb{P} \bigg ( \exists v \in \mathcal V : n^\beta D_{Q_n}(x, y) \geq n^\beta D_{Q_n}(x , v) - \delta_1  \bigg) \\
	& \leq
	\sum_{v \in \mathcal V} \mathbb{P} \left( (A_n^v)^c \right)  \\
	& \leq 
	2  \kappa_2 n \exp \left(  -\constB (m_f n)^{\constC} \right) \quad \forall n > n_0.   
 	\end{align*}
From Lemma \ref{lema:spacing} and the fact that $\constC<1/d{\le}1$ (\cite{HN2}), we get \eqref{eq:entornos_geo}.
	\end{proof}
 

We are ready to prove the upper bound in \eqref{eq:main_theorem}.

	
\begin{lem}[Upper bound] In the setting of Theorem \ref{teo:main_theorem_poisson}, there are positive constants $\constP$ and $n_0$ such that
\[
 \mathbb{P} \left(  n^\beta D_{Q_n}(x, y) > \mu \mathcal D_{f, \beta}(x,y)  + \varepsilon \right) \le {\rm exp}(-\constP n^{\constC}).
\]
for all $n > n_0$.
\end{lem}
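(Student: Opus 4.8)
The plan is to bound $n^\beta D_{Q_n}(x,y)$ from above by constructing an explicit discretized path that follows a near-optimal macroscopic curve and controlling the cost of each piece via Lemma~\ref{lema:bounds}. Fix $\varepsilon>0$. Since $\mathcal D_{f,\beta}(x,y) = \inf_\gamma \mathcal T_{f,\beta}(\gamma)$, choose a continuous rectifiable path $\gamma\subset S$ from $x$ to $y$ (lying in the open set, which is possible after a small perturbation because $S$ is open and connected and the minimizer can be approximated by paths staying away from $\partial S$) with $\mathcal T_{f,\beta}(\gamma) < \mathcal D_{f,\beta}(x,y) + \varepsilon/(4\mu)$. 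Next, partition $\gamma$ into short consecutive arcs by points $x=x_0, x_1,\dots,x_N=y$ chosen fine enough that on the $i$-th arc the function $f$ varies little — precisely, pick the mesh so small that $m_{f,i} := \inf_{[x_i,x_{i+1}]\text{-arc neighborhood}} f$ satisfies $\sum_i m_{f,i}^{-\beta}\,\ell_i \le \mathcal T_{f,\beta}(\gamma) + \varepsilon/(4\mu)$, where $\ell_i$ is the arc length of the $i$-th piece; this is possible by uniform continuity of $f$ on the compact arc and the definition of the line integral. One also arranges that small balls around each sub-arc stay inside $S$.

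The second step applies the lower-intensity coupling from the proof of Lemma~\ref{lema:bounds}: for each $i$, a ball $B_i$ around the $i$-th arc is contained in $S$, and inside $B_i$ we have $nf \ge n\, m_{f,i}$, so we may couple $Q_n$ with a homogeneous process $Q_n^{i}\sim\mathrm{Poisson}(B_i, n\,m_{f,i})$ with $Q_n^{i}\subset Q_n$. Since adding particles only decreases $D$, we get $D_{Q_n}(x_i,x_{i+1}) \le D_{Q_n^{i}}(x_i,x_{i+1})$, and by the homogeneous shape estimate (Proposition~\ref{prop:random.number.of.points}, applied with intensity $n\,m_{f,i}$, rescaling out the constant), there are constants so that
\begin{equation*}
\mathbb P\Bigl( n^\beta D_{Q_n}(x_i,x_{i+1}) > \mu\, m_{f,i}^{-\beta} |x_{i+1}-x_i| + \tfrac{\varepsilon}{4\mu N}\Bigr) \le \constZ \exp(-\constA n^{\constC})
\end{equation*}
for all $n$ large (absorbing $m_{f,i}\ge m_f>0$ into the constants, and noting that whenever $|x_{i+1}-x_i|$ is too small we may instead merge that piece or use that $D_{Q_n^i}$ is dominated by a single jump cost $|x_{i+1}-x_i|^\alpha$, which is $o(n^{-\beta})$ and hence negligible). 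Summing over the finitely many $i$, with probability at least $1 - N\,\constZ\exp(-\constA n^{\constC})$ all pieces are simultaneously controlled.

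On that event, concatenating the optimal sub-paths gives a path from $x$ to $y$, hence by the triangle inequality
\begin{equation*}
n^\beta D_{Q_n}(x,y) \le \sum_{i=0}^{N-1} n^\beta D_{Q_n}(x_i,x_{i+1}) \le \mu\sum_i m_{f,i}^{-\beta}|x_{i+1}-x_i| + \tfrac{\varepsilon}{4} \le \mu\sum_i m_{f,i}^{-\beta}\ell_i + \tfrac{\varepsilon}{4} \le \mu\,\mathcal D_{f,\beta}(x,y) + \varepsilon,
\end{equation*}
using $|x_{i+1}-x_i|\le\ell_i$ and the two mesh estimates above. Taking $\constP$ slightly smaller than $\constA$ to absorb the factor $N$ and the restriction to large $n$ yields the claim. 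The main obstacle is the uniformity of the constants in the homogeneous estimate across the (fixed, but $\varepsilon$-dependent) number $N$ of pieces and the possibly small values of $|x_{i+1}-x_i|$: one must choose the discretization of $\gamma$ \emph{first}, fixing $N$ and the $m_{f,i}$, and only then let $n\to\infty$, so that $N$ is a constant when the exponential bounds are summed; the degenerate short-segment case is handled separately since such segments contribute a single-jump cost of order $n^{-\beta}\cdot(\text{small})^\alpha$ which is negligible. A secondary technical point is ensuring $\gamma$ and all the balls $B_i$ stay within $S$ — guaranteed here because $S$ is open and the macroscopic geodesic (or a path arbitrarily close to it in cost) can be taken compactly inside $S$ when $\bar S\subset S$ fails only at the endpoints, and in general by the same $C^1$-boundary argument used in Lemma~\ref{lema:bounds}.
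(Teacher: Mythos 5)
Your approach---discretize a near-optimal curve, couple each piece locally with a homogeneous Poisson process of intensity $n\,m_{f,i}$, invoke the homogeneous concentration estimate of Proposition~\ref{prop:random.number.of.points}, and union bound over the finitely many pieces---is essentially the paper's proof. The paper packages the local coupling-plus-concentration step into Lemma~\ref{lema:bounds} applied to convex hulls $C_i=\mathrm{co}(B(\gamma^\star_i,\delta))$ around sub-arcs rather than calling the homogeneous proposition directly on balls, but that is a modularity choice, not a mathematical difference.

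The one substantive slip is in the treatment of possibly short segments. The claim that when $|x_{i+1}-x_i|$ is small the single-jump cost $|x_{i+1}-x_i|^\alpha$ is ``$o(n^{-\beta})$ and hence negligible'' is false: the discretization is fixed \emph{before} sending $n\to\infty$, so $|x_{i+1}-x_i|$ is a fixed positive constant and $n^\beta|x_{i+1}-x_i|^\alpha\to\infty$, not $0$. Your alternative of merging short pieces is what actually works, and the paper makes this precise by selecting the discretization points so that $\delta<|z_{i+1}-z_i|<2\delta$ for one fixed $\delta$; this guarantees that the separation hypothesis of Proposition~\ref{prop:random.number.of.points} holds uniformly and its $\delta$-dependent constants are controlled simultaneously across all pieces, which is exactly the uniformity you flagged as the ``main obstacle.'' You should strike the $o(n^{-\beta})$ justification and adopt the bounded-below spacing from the start.
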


\begin{proof}
	Let $\gamma^\star \subset S$ be a continuous and rectifiable curve {that starts at $x$ and ends at $y$} and such that $\int_{\gamma^\star} \frac{1}{f^\beta} < \mathcal D_{f, \beta}(x,y) + \varepsilon/(4\mu)$.
	 If $\varepsilon < 1$, the arc length $| \gamma^\star |$ is bounded above by
	\begin{equation*}
	| \gamma^\star | <  \ell^\star  := M_f^\beta \left(  \mathcal D_{f, \beta}(x,y) + \frac{1}{4\mu} \right).
	\end{equation*}		
	Let us consider a finite set of points $z_1, z_2 , \ldots , z_M \in \gamma^\star$ sorted according to a parametrization of $\gamma^\star$ that starts at $x$ and ends at $y$, such that $z_1 = x$, $z_M = y$ and $ \delta  < | z_{i+1} - z_i | < 2\delta$. Notice that $M = M(\delta) < \ell^\star / \delta $. Let $\gamma^\star_i$ be the part of $\gamma^\star$ that connects $z_i$ and $z_{i+1}$. Then 
	\begin{equation*}
	\int_{\gamma^\star} \frac{1}{f^\beta} = \sum_{i=1}^{M-1} \int_{\gamma_i^\star}  \frac{1}{f^\beta}.
	\end{equation*}
	Since $f^{-\beta}$ is integrable and uniformly continuous in $\overline{S\cap B(x, a|\gamma|)}$, we can choose $\delta>0$ such that
	\begin{enumerate}
	 \item[(i)] 
	$\displaystyle\sum_{i=1}^{M-1} {\left (\min_{\gamma_i^\star} f \right )^{-\beta} } | z_i - z_{i+1} | < \int_{\gamma^\star} \frac{1}{f^\beta} + \frac{\varepsilon}{4},$
	\item[(ii)] $|z-z'|<\delta \Longrightarrow |f^{-\beta}(z) - f^{-\beta}(z')| < \varepsilon_2:= \varepsilon m_f^\beta / (4\mu \ell^\star)$.
	\smallskip
	\item[(iii)] {${\rm co}(B(\gamma^\star_i,\delta)) \subset S$.}
	\end{enumerate}
        Recall here that {\rm co}$(B)$ denotes the convex hull of $B$. For $i = 1, 2, \ldots, M-1$ consider the set $C_i = {\rm co}(B(\gamma^\star_i,\delta))$. On the one hand,
	\begin{equation*}
	D_{Q_n}(x, y) 
	\leq 
	D_{Q_n \cap \left( \cup_{i=1}^{M-1} C_i \right) }(x, y) 
	\leq	
	\sum_{i=1}^{M-1} D_{Q_n \cap C_i} (z_{i}, z_{i+1}).
	\end{equation*}
	On the other hand, we have 
	\begin{align}
	\mu \mathcal D_{f, \beta}(x,y) + \varepsilon 
	& > 
	\mu \int_{\gamma^\star} \frac{1}{f^\beta} + \frac{3\varepsilon}{4} \nonumber \\
	& > 
	\mu \sum_{i=1}^{M-1} \left (\min_{\gamma^\star_i} f \right )^{-\beta} |z_{i+1} - z_{i} | + \frac{\varepsilon}{2} \nonumber \\
	& \geq
	\mu \sum_{i=1}^{M-1} {\left ( \min_{C_i} f \right )^{-\beta} } |z_{i+1} - z_{i} | + \frac{\varepsilon}{2} - \frac{\mu M \delta}{m_f^\beta} \varepsilon_2 \nonumber \\
	& >
	\mu \sum_{i=1}^{M-1} {\left (\min_{C_i} f \right )^{-\beta} } |z_{i+1} - z_{i} | + \frac{\varepsilon}{4} \nonumber.
	\end{align}
Then,
	\begin{align*}
	\mathbb{P} \Big(  n^\beta D_{Q_n}(x, y) & \geq \mu \mathcal D_{f, \beta}(x,y)  + \varepsilon \Big) 
	 \leq \\
%
        & \leq 	\mathbb{P} \left( \sum_{i=1}^{M-1} n^\beta D_{Q_n \cap C_i}(z_i, z_{i+1}) \geq \mu \sum_{i=1}^{M-1} {\left ( \min_{C_i} f \right )^{-\beta} } |z_{i+1} - z_{i} | + \frac{\varepsilon}{4}  \right) \nonumber \\
	& \leq 	\sum_{i=1}^{M-1} \mathbb{P} \left( n^\beta D_{Q_n \cap C_i}(z_i, z_{i+1}) \geq \mu {\left ( \min_{C_i} f \right )^{-\beta} } |z_{i+1} - z_{i} | + \frac{\varepsilon}{4M}   \right) \nonumber \\
	& \leq 	M \exp \left( -\constB (m_f n)^{\constC} \right) \quad \text{ for all } n > n_0 \nonumber,
	\end{align*}	
	by Lemma \ref{lema:bounds} (applied to each $C_i$). Notice that the constant $\constB$ depends only on $\delta$. This finishes the proof of the lemma. 
	\end{proof}

	\begin{lem}[Lower bound] In the setting of Theorem \ref{teo:main_theorem_poisson}, there exist positive contants $\constQ$ and $n_0$ such that 
	 { \[
	  \mathbb{P} \left(  n^\beta D_{Q_n}(x, y) < \mu \mathcal D_{f, \beta}(x,y)  - \varepsilon \right) \le  {\rm exp}(-\constQ n^{\constR}), 
	 \]}
	 for all $n > n_0$.
	\end{lem}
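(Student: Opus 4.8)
The plan is to bound $n^{\beta}D_{Q_n}(x,y)$ from below by a Riemann sum converging to $\mu\,\mathcal D_{f,\beta}(x,y)$, obtained by localizing the homogeneous estimate of Proposition~\ref{prop:random.number.of.points} on small cubes on which $f$ is essentially constant. First I would confine the problem to a fixed bounded region: by Lemma~\ref{lemma:entornos} (applied along a near-$f$-geodesic, or directly with $\overline{xy}$ when $\overline{xy}\subset S$, or in its bounded-$S$ instance) there is a bounded $R\subset\bar S$ with $D_{Q_n}(x,y)=D_{Q_n\cap R}(x,y)$ off an event of probability $\le e^{-\constL n^{\constC}}$. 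Working on $R$, Lemma~\ref{lema:spacing} gives that the minimizing path $(q_1,\dots,q_{k_n})$ has all consecutive spacings below a prescribed $\delta>0$ off an event of probability $\le e^{-\constN n}$, and Proposition~\ref{prop:bounded.arc.length} (applied to $Q_n\cap R$) gives $|\overline{r_{Q_n,\alpha}(x,y)}|\le\ell$ off an event of probability $\le e^{-\constF n^{\constG}}$; this uniform arc-length bound is what will keep the number of pieces in the decomposition below finite.

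Next I would fix $h>0$ and tile $R$ by half-open cubes $C_1,\dots,C_N$ of side $h$; set $M_j=\sup_{\overline{B(C_j,h)}}f$ and $g_h=\sum_j M_j\mathbf 1_{C_j}$, so $g_h\ge f$ on $R$, hence $\mathcal D_{g_h,\beta}(x,y):=\inf_\gamma\int_\gamma g_h^{-\beta}\le\mathcal D_{f,\beta}(x,y)$, while by uniform continuity and positivity of $f$ on the compact set $\overline R$ one has $\mathcal D_{g_h,\beta}(x,y)\to\mathcal D_{f,\beta}(x,y)$ as $h\to0$ and, for any fixed $\varepsilon'>0$ and $h$ small, $g_h\ge(1-\varepsilon')M_j$ throughout $B(C_j,h)$ for every $j$. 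I fix $h$ (and later $\varepsilon'$) small enough that $\mu(1-\varepsilon')^{\beta}\,\mathcal D_{g_h,\beta}(x,y)>\mu\,\mathcal D_{f,\beta}(x,y)-\varepsilon/2$. For each $j$, enlarge the probability space (as in Lemma~\ref{lema:bounds}) to couple $Q_n$ with a homogeneous PPP $Q^{+,j}_n\sim{\rm Poisson}({\rm co}(B(C_j,2h)),nM_j)$ with $Q_n\cap B(C_j,2h)\subset Q^{+,j}_n$, and place in $R$ a grid $\mathcal V$ of $O(n)$ points of mesh $\delta_0 n^{-1/d}$. Applying Proposition~\ref{prop:random.number.of.points} to every ordered pair $(v,v')\in\mathcal V^2$ lying in a common $B(C_j,2h)$ with $|v-v'|>h/4$, together with a union bound over the $O(n^2)$ such pairs — legitimate because $\constC<1/d$, so $n^2e^{-cn^{\constC}}\le e^{-\constQ n^{\constR}}$ — yields an event of probability $\ge1-e^{-\constQ n^{\constR}}$ on which $n^\beta D_{Q^{+,j}_n}(v,v')\ge\mu M_j^{-\beta}|v-v'|-\constD M_j^{-\beta-1/3d}n^{-1/3d}$ for all such $j,v,v'$.

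On the intersection of these events I would decompose the minimizing path greedily into pieces of Euclidean diameter $\le h$: put $v_0=q_1$ and, inductively, let $v_m$ be the first $q_i$ past $v_{m-1}$ with $|q_i-v_{m-1}|>h/2$, stopping when no such point remains and letting the final piece end at $v_L=q_{k_n}$. Since the spacing is $<\delta\le h/2$, one gets $h/2<|v_{m-1}-v_m|\le h$ for $m<L$, so each of the first $L-1$ pieces has arc length $>h/2$, whence $L<1+2\ell/h$ is bounded independently of $n$; moreover the $m$-th piece lies in $B(v_{m-1},h)\subset B(C_{J_m},2h)$ (with $C_{J_m}$ the cube of $v_{m-1}$) and is a path in $Q_n\cap B(C_{J_m},2h)\subset Q^{+,J_m}_n$. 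Approximating $v_{m-1},v_m$ by grid points and invoking the last display, the $m$-th piece ($m<L$) contributes at least $\mu M_{J_m}^{-\beta}|v_{m-1}-v_m|-o_n(1)$ after multiplication by $n^\beta$, with $o_n(1)$ uniform in $m$ (it collects the $n^{-1/3d}$ deviation slack and the $O(n^{\beta-\alpha/d})=O(n^{-1/d})$ grid slack), while the last piece contributes $\ge0$ (its omission from the sum below costs only $O(h)$). Summing over the boundedly many pieces and using that the polygonal $\tau'$ joining $x\to q_1\to v_1\to\cdots\to v_L=q_{k_n}\to y$ (pushed slightly into $\bar S$ near $\partial S$ if needed) satisfies, via $g_h\ge(1-\varepsilon')M_{J_m}$ on $\overline{v_{m-1}v_m}$, the inequality $\sum_{m<L}M_{J_m}^{-\beta}|v_{m-1}-v_m|\ge(1-\varepsilon')^{\beta}\int_{\tau'}g_h^{-\beta}-O(h)\ge(1-\varepsilon')^{\beta}\mathcal D_{g_h,\beta}(x,y)-O(h)$, one obtains
\[
n^\beta D_{Q_n}(x,y)\ \ge\ \mu(1-\varepsilon')^{\beta}\,\mathcal D_{g_h,\beta}(x,y)-O(h)-L\cdot o_n(1)\ \ge\ \mu\,\mathcal D_{f,\beta}(x,y)-\varepsilon
\]
for $n\ge n_0$, choosing $h,\varepsilon'$ as in the previous step and $n_0$ large enough that $L\,o_n(1)<\varepsilon/4$. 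Adding up the exceptional probabilities $e^{-\constL n^{\constC}}+e^{-\constN n}+e^{-\constF n^{\constG}}+e^{-\constQ n^{\constR}}$ and absorbing them (using $\constC,\constG,\constR<1$) into a single $e^{-\constQ n^{\constR}}$ with relabelled constants proves the lemma.

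The step I expect to be the main obstacle is precisely keeping the number $L$ of pieces — and hence the total approximation error — bounded independently of $n$: without an a priori control on how far the microscopic geodesic can wander, $L$ can only be bounded by a growing power of $n$, at which point the accumulated $n^{-1/3d}$ deviation slacks diverge. This is exactly why the arc-length bound of Proposition~\ref{prop:bounded.arc.length} is indispensable here; interlocking it with the localized homogeneous estimate so that the Riemann sum closes — together with the routine but fiddly handling of $\partial S$ — is the delicate part. Everything else amounts to matching the scales $h,\delta,\delta_0$ with Proposition~\ref{prop:random.number.of.points}, the localization Lemmas~\ref{lemma:entornos} and~\ref{lema:spacing}, and the uniform continuity of $f$.
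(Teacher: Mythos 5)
Your proposal is broadly correct in spirit and ends up doing the same kind of Riemann-sum comparison the paper performs, but it takes one deliberately different route and has one genuine gap.

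The most visible difference is that you bound the number $L$ of macroscopic pieces by invoking Proposition~\ref{prop:bounded.arc.length} and dividing the arc-length bound $\ell$ by the minimum piece length $h/2$. The paper explicitly considered this option and declined it: it remarks that the bound ``would be immediate if we assume that the arc lengths of the minimizing paths are bounded (which is proved in Section~\ref{arclength}), but this assumption is not really necessary at this point,'' and instead obtains $k\le K$ directly by showing each piece must have cost at least $\Delta=\mu M_f^{-\beta}\delta/8$ while the total cost is at most $2\mu m_f^{-\beta}\mathcal D_0(x,y)$ off a small event (eq.~\eqref{eq:bounds_fmin}). Both are valid: Proposition~\ref{prop:bounded.arc.length} depends only on Lemma~\ref{lema:bounds}, so there is no circularity; you simply incur a forward reference to Section~\ref{arclength}, which the paper's argument avoids and keeps the lower bound self-contained within Section~\ref{nonhomogeneous}. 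Your union bound over $O(n^2)$ grid pairs is also a mild simplification of the paper's union bound over $(\kappa_3 n)^K$ grid paths; both give the same type of decay since $K$ and $L$ are $n$-independent. The step-function $g_h=\sum_j M_j\mathbf 1_{C_j}$ plays the role of the paper's $M_{f,i}$ and $\min_{C_i}f$ and is fine once the convergence $\mathcal D_{g_h,\beta}\to\mathcal D_{f,\beta}$ is justified (it is, provided one restricts to curves in the bounded $R$ so that near-minimizers have uniformly bounded arc length).

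The genuine gap is the treatment of the boundary. You integrate $g_h^{-\beta}$ over the polygonal $\tau'$ through $x\to q_1\to v_1\to\cdots\to y$ and compare with $\mathcal D_{g_h,\beta}(x,y)$, but $\mathcal D_{g_h,\beta}$ is an infimum over curves \emph{inside} $\bar S$, and when $S$ is not convex a segment $\overline{v_{m-1}v_m}$ can leave $\bar S$ entirely, where $g_h$ is not even defined. Saying the polygonal is ``pushed slightly into $\bar S$ near $\partial S$ if needed'' is not an argument: pushing a chord back into a nonconvex region can lengthen it by a non-negligible amount, and there is no a priori reason the modified curve is short enough to close the Riemann sum. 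The paper addresses exactly this point: it replaces the chord $\overline{v_iv_{i+1}}$ by the geodesic $r_i$ in $\bar S$, and uses the $C^1$-boundary hypothesis to prove the quantitative comparison $|v_i - v_{i+1}| > (1-\varepsilon_4)\,\mathcal D_0(v_i,v_{i+1})$ uniformly in $\bar S$, before comparing $\sum_i(\min_{r_i}f)^{-\beta}|v_i-v_{i+1}|$ with $\int_{(r_1,\dots,r_k)}f^{-\beta}\ge \mathcal D_{f,\beta}(x,y)$. You need an analogous step — either route your polygonal through the in-$\bar S$ geodesics between consecutive $v_m$'s and control the length distortion via the $C^1$ boundary, or restrict attention to $h$ small enough that all segments $\overline{v_{m-1}v_m}$ remain inside $S$ (which again requires the local-convexity property supplied by the $C^1$ assumption). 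Once this is supplied, your argument goes through.
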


\begin{proof}	
	
	By Lemma \ref{lemma:entornos} we can assume $S$ is bounded (if it is not bounded, we consider $S\cap B(x, a|\gamma|)$, with $\gamma$ any path from $x$ to $y$ instead of $S$). Let $\gamma_n=(q_1, \dots, q_{k_n})$ be the minimizing path. For $\delta > 0$, consider the event $E_n = \{ \max_{j<k_n} | q_{j} - q_{j+1} | < \delta \}$. If $E_n$ occurs, there are particles $q^\star_1, q^\star_2, \ldots, q^\star_k \in \gamma_n \cap Q_n$ with $ \delta < | q^\star_{i+1} - q^\star_i | < 4\delta$ for $i = 0, 1, 2, \ldots , k$, with $q^\star_0 = x$, $q^\star_{k+1} = y$. We can construct this sequence inductively as follows. Denote $\tau_0=0$,  $q_0^\star=x$. {For $i \ge 0$}, if $| q^{\star}_i - y | < 4\delta$, then $q^\star_{i+1} = y$ and we set $k=i+1$. If not, we choose $q_{i+1}^\star=q_{\tau_i}$ with $\tau_{i+1}=\min \{j>\tau_i \colon  2\delta <  |q_j - q_i^\star| < 3\delta\}$. The existence of $\tau_{i+1}$ (in case we need to define it) is guaranteed since we are assuming that $E_n$ occurs. With this construction we have $|q_k^\star -q_ {k-1}^\star|= |y- q_{k-1}^\star|> |y-q_{k-2}^\star|-|q_{k-1}^\star - q_{k-2}^\star|>4\delta - 3\delta=\delta$ and hence $\delta<|q_{i+1}^\star - q_ i^\star|<4\delta$ for every $1\le i \le k-1$.
%
	We will see that there exists a constant $K$ such that $k \leq K$ with overwhelming probability. This would be immediate if we assume that the arc lengths of the minimizing paths are bounded (which is proved in Section \ref{arclength}), but this assumption is not really necessary at this point as the following argument shows. Notice that
	\begin{equation}
	D_{Q_n} (x , y ) = 
	\sum_{i=0}^{k} D_{Q_n} (q^\star_i, q^\star_{i+1}). 
	\label{eq:separo_en_u}
	\end{equation}
	For $\delta_0 > 0$, that will be chosen later, consider the following covering of $\bar S$,
	\begin{equation*}
	\bar S \subset \bigcup_{v \in \mathcal V} B \left( v, \delta_0 n^{-1/d} \right) .
	\end{equation*}
	Here $\mathcal{V} \subset S$ 
	is chosen such that $\# \mathcal V \leq \kappa_3 n$ for some constant $\kappa_3<\infty$. Let $ w_1, w_2, \ldots, w_k \in \mathcal V$ be such that $q^\star_i \in B(w_i , \delta_0 n^{-1/d} )$ for every $i \leq k$. For a given $i \leq k$ it holds 
	\begin{align*}
	n^\beta D_{Q_n} (q^\star_i , q^\star_{i+1}) 
	& \geq 
	n^\beta ( 
	D_{Q_n} (w_i, w_{i+1})
	- D_{Q_n} (w_i, q_i^\star) 
	- D_{Q_n} (w_{i+1} , q_{i+1}^\star ) 
	)\\
	& \geq n^\beta 
	D_{Q_n} (w_i, w_{i+1}) - 2(2\delta_0)^\alpha. 
	\end{align*} 
	If in addition $\delta_0 < \delta / 4$, we have
	$$| w_i - w_{i+1}| > | q_i^\star - q_{i+1}^\star | - | w_i - q_i^\star | - |w_{i+1} - q_{i+1}^\star| > \delta - \delta /4 - \delta/4 = \delta/2.$$
	Let $\Delta = \mu M_f^{-\beta} \delta / 8$ and choose $\delta_0$ with $2(2\delta_0)^\alpha < \Delta$. Then
	\begin{equation*}
	\mathbb{P} \bigg( \min_i n^\beta D_{Q_n}(q^\star_{i}, q^\star_{i+1}) < \Delta \bigg)
	\leq 
	\mathbb{P} \bigg( \exists \, v_1, v_2 \in \mathcal V \colon |v_1 - v_2| > \delta/2 \text{ and } n^\beta D_{Q_n}(v_1, v_2) < 2 \Delta \bigg).
	\end{equation*}	
	Since the number of possible elections of $v_1$ and $v_2$ is upper bounded by $(\kappa_3 n)^2$, from Lemma \ref{lema:bounds} we conclude that
	\begin{equation*}
	\mathbb{P} \bigg( \min_i n^\beta D_{Q_n}(q^\star_{i}, q^\star_{i+1}) < \Delta \bigg) < (\kappa_3 n)^2 \exp(-\constB (m_f n)^{\constC}).
	\end{equation*}		
	If $n^\beta D_{Q_n}(x,y) < 2\mu m_f^{-\beta} \D_0(x,y)$ and $n^\beta D_{Q_n}(q_i^\star , q_{i+1}^\star) > \Delta$ for every $i \le k$, then from \eqref{eq:separo_en_u} we obtain $k\Delta < 2\mu m_f^{-\beta} \D_0(x,y)$. Hence, for $K=K(\delta):=16 \delta^{-1}(M_f/m_f)^\beta  \D_0(x,y)$,
	\begin{equation}
	\label{eq:k.less.than.K}
	 \P \left(k > K \right) \le \exp(-\constB(M_f n)^{\constC}) +{(\kappa_3 n)^2 \exp(-\constB (m_f n)^{\constC})},
	\end{equation}
for $n$ large enough by \eqref{eq:bounds_fmin}.

 If we choose $ (2\delta_0)^\alpha  < (\varepsilon / 4K)$, using triangular inequality in \eqref{eq:separo_en_u} we get
	\begin{align*}
	n^\beta D_{Q_n} (x , y) 
	& \geq 
	\sum_{i=0}^k 
	n^\beta ( 
	D_{Q_n} (w_i, w_{i+1})
	- D_{Q_n} (w_i, q_i^\star) 
	- D_{Q_n} (w_{i+1} , q_{i+1}^\star ) 
	)\\
	& \geq 
	\sum_{i=0}^k n^\beta (
	D_{Q_n} (w_i, w_{i+1})
	{ - 2 (2\delta_0)^\alpha n^{-\alpha / d}) }\\
	& \geq \sum_{i=0}^k n^\beta 
	D_{Q_n} (w_i, w_{i+1}) - {\varepsilon}/{2}.
	\end{align*} 
	Then,
	\begin{align*}
	\mathbb{P} (  n^\beta & D_{Q_n}(x, y)  \leq \mu \mathcal D_{f, \beta}(x,y)  - \varepsilon )\\ 
	 \leq 
	&  \,\, \mathbb P \bigg(  \exists \, v_1, \ldots, v_k \in \mathcal V \text{ with $k \leq K$ and } \frac{\delta}{2} < |v_i - v_{i+1}| < 5\delta \text{ such that} \nonumber \\
	 &  \sum_{i=0}^k n^\beta 
	D_{Q_n} (v_i, v_{i+1}) \leq \mu \mathcal D_{f, \beta}(x,y) - \frac{\varepsilon}{2} \text{, } E_n 
	\bigg)  + 
	\mathbb P \left( k>K \right) + 
	\mathbb P \left( E_n^c \right).
	\end{align*}
The second term is bounded by \eqref{eq:k.less.than.K} and  Lemma \ref{lema:spacing} gives us an exponential bound for the third one. Let us focus on the first one. Notice that the number of paths $(v_1, v_2, \ldots, v_k)$ with $v_i \in \mathcal V$ and $k \leq K$ is bounded above by $(\kappa_3 n)^{K}$. Fix any one these paths and denote 
	\[
	 M_{f,i}:=\sup_{z \in B({v_i}, a { | \overline{v_iv_{i+1}} | } )\cap S}  f(z)
	\]
	and consider the events
	\begin{align*}
	A_i &= \left \{ D_{Q_n}(v_i, v_{i+1}) = D_{Q_n \cap B({v_i}, a{ | \overline{v_iv_{i+1}} | } ) }(v_i, v_{i+1}) \right \} \\
	B_i &= \left \{ n^\beta D_{Q_n \cap B({v_i}, a { | \overline{v_iv_{i+1}} | }) }(v_i, v_{i+1}) \geq \mu {M_{f,i}^{-\beta}}   |v_i - v_{i+1} | - \frac{\varepsilon}{8K} \right \} \cap A_i.
	\end{align*}
	From Lemma \ref{lema:bounds} and Lemma \ref{lemma:entornos} we get that 
	\begin{equation*}
	\mathbb P (B_i^c) \leq  \exp \left( -\constB m_f^{\constC} n^{\constC} \right) + \exp \left( -\constL n^{\constC} \right) \quad \forall n>n_0, \quad i = 1, 2, \ldots, k-1. 
	\end{equation*}
	The constants $\constB$, $\constL$ and $n_0$ depend on $\delta$.
	Now choose $\delta > 0$ such that for $z,z' \in S$ with  $| z - z' | < 5(a+1)\delta$  implies $| f^{-\beta}(z) - f^{-\beta}(z') | < \varepsilon_3 = {\varepsilon m_f^{2\beta} / (128 \mu \D_0(x,y )M_f^\beta) }$. Denote $r_i$ the geodesic between $v_i$ and $v_{i+1}$. We have, 
	\begin{equation*}
	\sum_{i=0}^k 
	M_{f,i}^{-\beta} 
	|v_i - v_{i+1}|
	>
	\sum_{i=0}^k
	{(1-\varepsilon_3)}{\left (\min_{r_i} f \right)^{-\beta} } 	
	| v_i - v_{i+1} |
	> 
	\sum_{i=0}^k {\left (\min_{r_i} f \right)^{-\beta} } 	
	| v_i - v_{i+1} | - \frac{\varepsilon}{8\mu}
	\end{equation*}
	Since the boundary of $S$ is $C^1$, we can control the geodesic distance by the Euclidean distance uniformly in $\bar S$. More precisely, for each $x \in S$ there exists $\delta_x > 0$ such that $B(x, \delta_x) \subset S$ and consequently $\D(x, y)=|x - y|$ for all $y \in B(x, \delta_x)$. If $x \in \partial S$, since the boundary of $S$ is $C^1$, we have $\D_0(x,y)= |x-y| + o(|x-y|)$. Then, by compactness of $\bar S$, we can choose $\delta > 0$ such that 
$| v_i - v_{i+1} | > (1 - \varepsilon_4) \D_0(v_i, v_{i+1})$
with {$\varepsilon_4 = \varepsilon_3/10$}. If we call $(r_1, r_2, \ldots, r_k)$ the concatenation of the geodesics $r_1, r_2, \ldots, r_k$, we have 
	\begin{equation*}
	\sum_{i=0}^k {\left (\min_{r_i} f \right)^{-\beta} } 	
	| v_i - v_{i+1} | > \int_{(r_1, r_2, \ldots, r_k)} \frac{1}{f^\beta} - \frac{\varepsilon}{8\mu}.
	\end{equation*}
Then,
	\begin{align}
	& \mathbb P \left(  \sum_{i=0}^k n^\beta 
	D_{Q_n} (v_i, v_{i+1}) \leq \mu \mathcal D_{f, \beta}(x,y) - \frac{\varepsilon}{2}, \,  k \le K, \,  E_n
	\right) \nonumber \\
	\leq \text{ } 
	& \mathbb P \Bigg(  \sum_{i=0}^k   
	\mu M_{f,i}^{-\beta}   |v_i - v_{i+1} | - \frac{\varepsilon k }{8 K} \leq \mu \mathcal D_{f, \beta}(x,y) - \frac{\varepsilon}{2}, \, k\le K, \,  E_n, \, \bigcap_{i=0}^k B_i
	\Bigg)  + 	\sum_{i=0}^k \mathbb{P}(B_i^c) \nonumber \\
	\leq \text{ }
	& \mathbb P \left(  \mu \int_{(r_1, \dots, r_k)} \frac{1}{f^\beta} \leq \mu \mathcal D_{f, \beta}(x,y) - \frac{\varepsilon}{8},\,  k\le K,\, E_n,\, \bigcap_{i=0}^k B_i
	\right)
	+
	\sum_{i=0}^k \mathbb{P}(B_i^c)	\label{eq:proba_integral_cero}.
	\end{align}	 	
        Since
        \[
         \int_{(r_1, \ldots , r_k)} \frac{1}{f^\beta} \ge \mathcal D_{f, \beta}(x,y),
        \]
the first term in \eqref{eq:proba_integral_cero} is zero. Combining all these facts, we get 
  	\begin{align*}
  	\mathbb{P} \bigg (  n^\beta D_{Q_n}(x, y) & \leq \mu \mathcal D_{f, \beta}(x,y)  - \varepsilon \bigg )
  	\, \leq \, 
  	\mathbb P \left( k \ge K  \right)
  	+ 
  	\mathbb P \left( E_n^c \right)
  	+
  	\sum_{\substack{v_1, \ldots , v_k \in \mathcal V \\ |v_i - v_{i+1}| > \delta / 2}} \sum_{i=0}^k \mathbb{P} (B_i^c)\\
  	& \le \exp(-\constB(M_f n)^{\constC}) +{(\kappa_3 n)^2 \exp(-\constB (m_f n)^{\constC})} \\
  	& +  \exp(-\constN n)\\
  	& + (\kappa_3 n)^{K}( \exp \left( -\constB m_f^{\constC} n^{\constC} \right) + \exp \left( -\constL n^{\constC} \right))\\
  	& \le \exp(-\constQ n^{\constR}),
  	\end{align*}
  	for every $n\ge n_0$ if $\constQ$ and $n_0$ are chosen adequately. This concludes the proof of the lemma and Theorem \ref{teo:main_theorem_poisson}.
   	\end{proof}

\section{Manifolds}
\label{manifold}

We now consider the case in which the data is supported on a (possibly lower dimensional) manifold. We consider a manifold $\M$ that is the image of an isometric transformation from the closure of an open connect set of $\R^d$. The proof is based on the fact that a $d$-dimensional manifold is locally equivalent to $\R^d$ and that if in addition $\M$ is smooth enough, then geodesic and Euclidean distances are similar locally.

We consider $S \subset \R^d$ an open connected set and a diffeomorphism  $\phi : \bar S \mapsto \mathcal M := \phi(\bar S) \subset \R^D$, with $d < D$. 
Let $J_\phi(z) \in \R^{D \times d}$ be the Jacobian matrix of  $\phi$ defined by 
	\begin{equation*}
	\left( J_\phi (z) \right)_{ij} = \frac{\partial \phi_i}{\partial z_j} (z). 
	\end{equation*}	 
We assume that $\phi$ is an isometric transformation, i.e. for every $z \in S$ and $\mathbf v, \mathbf w\in \R^D$ tangent to $\M$ at $\phi(z)$ we have 
	\begin{equation*}
	\left( J_\phi(z) \mathbf v \right)^t
	\left( J_\phi(z) \mathbf w \right) 
	= 
	\mathbf v^t \mathbf w,
	\end{equation*}
which is equivalent to $J_{\phi}(z)^T J_\phi(z) = \mathbb I_d$. Here $\mathbb I_d$ is the identity matrix in $\R^{d\times d}$. If $\M$ is compact, then for every $\varepsilon_0 > 0$ there exists $\delta_0 > 0$ such that
	\begin{equation}
	(1 - \varepsilon_0) | \phi^{-1}(x) - \phi^{-1}(y) |
	<
	| x - y |
	<
	(1 + \varepsilon_0) | \phi^{-1}(x) - \phi^{-1}(y) |,
	\label{eq:manifold_local}  
	\end{equation}
	if $|x-y|<\delta_0$.

We first need to extend Lemma \ref{lema:spacing} to manifolds. The proof is straightforward and we omit it.

\begin{lem}
Assume $\M\subset \R^D$ is a $C^1$ $d$-dimensional manifold. Let $Q_n=\{q_1,\dots, q_n\}$ be independent random points with common density $f$. For $\alpha >1$ and $x, y \in \M$, let $(q_1, \dots, q_{k_n})$ be the minimizing path. Given $\delta > 0$, there exists a positive constant $\constX$ such that 
	\begin{equation*}
	\mathbb P \bigg ( \max_{i<k_n} | q_{i} - q_{i+1} | > \delta   \bigg ) \leq   \exp \left( - \constX n \right ).
	\end{equation*}	
	\label{lema:spacing.manifold}
	\end{lem}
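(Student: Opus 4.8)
The plan is to follow the structure of the proof of Lemma~\ref{lema:spacing}, with one modification forced by the geometry. In the Euclidean case one uses that the ``emptiness region'' $\{z:|z-q_{i+1}|^{\alpha}+|z-q_i|^{\alpha}<|q_{i+1}-q_i|^{\alpha}\}$ around the chord $\overline{q_iq_{i+1}}$ of the minimizing path contains a $d$-dimensional cube of edge proportional to $|q_{i+1}-q_i|$, and then argues that every cube of a fixed fine grid is non-empty with overwhelming probability. On a $C^{1}$ manifold this fails verbatim: $\M$ may curve away from the ambient chord and meet that lens in a set of small (even zero) $\mathcal H^{d}$-measure once $|q_i-q_{i+1}|$ is not small. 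Instead I would use minimality more forcefully. Since $(q_1,\dots,q_{k_n})$ is a minimizing path, for every $i$ one has $|q_i-q_{i+1}|^{\alpha}=D_{Q_n,\alpha}(q_i,q_{i+1})$, because replacing the edge $\overline{q_iq_{i+1}}$ by an (almost) optimal subpath between its endpoints would otherwise strictly decrease the cost; hence it suffices to bound $D_{Q_n,\alpha}(q_i,q_{i+1})$ from above, uniformly in $i$, with the required probability.

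For $\rho>0$ let $G_{\rho}$ be the event that every intrinsic ball $B_{\M}(p,\rho)$, $p\in\M$, contains at least one particle of $Q_n$. On $G_{\rho}$ I would build a cheap path from $q_i$ to $q_{i+1}$: since $\M=\phi(\bar S)$ is compact and connected, pick a rectifiable curve in $\M$ joining $q_i$ and $q_{i+1}$ of length at most $R:=\diam(\M)$, sample it along arclength with spacing $\rho$ to obtain $\lceil R/\rho\rceil$ nodes (the first and last being $q_i$ and $q_{i+1}$), and replace each node by a particle of $Q_n$ within intrinsic distance $\rho$ of it. Consecutive chosen particles then have ambient distance at most their intrinsic distance, which is at most $3\rho$, so this path has cost at most $\lceil R/\rho\rceil(3\rho)^{\alpha}\le c_{\alpha}R\rho^{\alpha-1}$ for a constant $c_{\alpha}$ depending only on $\alpha$ (using $\rho\le R$). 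Therefore on $G_{\rho}$ we get $|q_i-q_{i+1}|^{\alpha}=D_{Q_n,\alpha}(q_i,q_{i+1})\le c_{\alpha}R\rho^{\alpha-1}$ for all $i$. Choosing $\rho=\rho_{\delta}:=(\delta^{\alpha}/(c_{\alpha}R))^{1/(\alpha-1)}$ makes the right-hand side equal to $\delta^{\alpha}$, so that $\{\max_{i<k_n}|q_i-q_{i+1}|>\delta\}\subseteq G_{\rho_{\delta}}^{c}$. Since $\rho_{\delta}\to 0$ as $\delta\to 0$, and since $\{\max>\delta\}\subseteq\{\max>\delta'\}$ whenever $\delta'<\delta$, it is enough to treat $\delta$ (equivalently $\rho_{\delta}$) small.

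It remains to estimate $\P(G_{\rho_{\delta}}^{c})$, which is routine. Compactness of $\M$ gives $m_f:=\min_{\M}f>0$, and the uniform $C^{1}$ regularity of $\phi$ provides a threshold $\rho_{0}>0$ and $c>0$ such that $\vol_{\M}(B_{\M}(p,r))\ge c\,r^{d}$ for all $p\in\M$ and $r\le\rho_{0}$ (short intrinsic balls are $C^{1}$ graphs over their tangent disks; this is the quantitative content behind \eqref{eq:manifold_local}); take $\delta$ small so that $\rho_{\delta}\le\rho_{0}$. Cover $\M$ by $N=N(\delta)<\infty$ intrinsic balls of radius $\rho_{\delta}/2$; if some intrinsic ball of radius $\rho_{\delta}$ is particle-free then so is one of these covering balls $B$, and $\P(Q_n\cap B=\emptyset)=(1-\int_{B}f)^{n}\le\exp(-m_f c(\rho_{\delta}/2)^{d}n)$. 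A union bound then gives $\P(G_{\rho_{\delta}}^{c})\le N\exp(-m_f c(\rho_{\delta}/2)^{d}n)\le\exp(-\constX n)$ for $n$ large, with $\constX=\constX(\delta)>0$, which is the assertion. The only genuinely new point relative to Lemma~\ref{lema:spacing}, and the part where a little care is needed, is the rerouting step: one must observe that minimality of the path combined with the fact that, with overwhelming probability, $Q_n$ is $\rho_{\delta}$-dense in $\M$ already forces consecutive particles to be $\delta$-close, which serves as a substitute for the ``cube inside the emptiness region'' argument and survives the curvature of $\M$.
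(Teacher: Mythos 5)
Your proof is correct, and it takes a genuinely different route from the one the paper's ``the proof is straightforward and we omit it'' suggests, namely a verbatim transfer of the lens argument of Lemma~\ref{lema:spacing}. That argument rests on the fact that the particle-free set $\{z : |z-q_i|^{\alpha}+|z-q_{i+1}|^{\alpha}<|q_i-q_{i+1}|^{\alpha}\}$ contains a $d$-cube of side proportional to $|q_i-q_{i+1}|$, which can be matched against a fixed grid. As you point out, this fails on a curved $\M$: the lens is concentrated near the ambient midpoint of the chord, which can be far from $\M$ once the chord is not short, so $\M$ may intersect it in a set of small or even vanishing $\mathcal{H}^{d}$-measure (take $\M$ a circle of radius $R$, $\alpha=2$, $q_i,q_{i+1}$ antipodal: the lens is the open disk, disjoint from $\M$). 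Your replacement is to exploit minimality more strongly --- the dynamic-programming identity $|q_i-q_{i+1}|^{\alpha}=D_{Q_n,\alpha}(q_i,q_{i+1})$ --- and then bound $D_{Q_n,\alpha}(q_i,q_{i+1})$ from above by a rerouted chain of particles on the high-probability event $G_{\rho}$ that $Q_n$ is intrinsically $\rho$-dense. The only geometric input this needs is a uniform lower bound $\vol_{\M}(B_{\M}(p,r))\gtrsim r^{d}$ on small intrinsic balls, which is exactly the quantitative $C^{1}$ information the paper already uses via~\eqref{eq:manifold_local}. What the rerouting buys is robustness: it works uniformly in $\alpha>1$ and does not care how the chord sits relative to the tangent planes of $\M$; what the lens argument buys, when it applies, is a shorter proof that reads directly off the emptiness of a single region. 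Two minor points: you should state that you are using compactness of $\M$ (needed for the finite intrinsic diameter $R$, for $m_f>0$, and for a finite cover), which is consistent with the paper's setting $\M=\phi(\bar S)$ with $\bar S$ compact; and the subpath-optimality identity deserves one sentence justifying that splicing a strictly cheaper $q_i$-to-$q_{i+1}$ subpath into the geodesic (excising any cycles that appear) would strictly decrease the total cost. With those made explicit, the argument is complete and yields the stated $\exp(-\constX n)$ bound.
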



\begin{proof}[Proof of Theorem \ref{teo:manifolds}] Given $Q_n$, we consider  $\tilde Q_n = \phi^{-1} (Q_n)$,  $\tilde x=\phi^{-1}(x)$, $\tilde y=\phi^{-1}(y)$. The points in $\tilde Q_n$ are independent, with common density $g: S \to \R_{\geq 0}$ given by 	
	\begin{equation*}
	g(z) = f( \phi(z) ) \sqrt{\det \left( J_\phi (z)^t J_\phi (z) \right) }  = f( \phi(z) ) .
	\end{equation*}	 
Given $\varepsilon_0 > 0$, let $\delta_0$ be as in \eqref{eq:manifold_local}. Then for every path $(q_1, q_2, \ldots , q_k)$ in $\M$ with $|q_i - q_{i+1}| < \delta_0$ we have
	\begin{equation*}
    (1-\varepsilon_0)^\alpha \sum_{i=1}^{k-1} | \tilde  q_{i+1} - \tilde q_i |^\alpha
	< 
	\sum_{i=1}^{k-1} | q_{i+1} - q_i |^\alpha 
	<
	(1+\varepsilon_0)^\alpha \sum_{i=1}^{k-1} | \tilde  q_{i+1} - \tilde q_i |^\alpha.
	\end{equation*}
Then, on the event $\{ n^\beta D_{\tilde Q_n} (\tilde x, \tilde y) < 2 \mu m_f^{-\beta} \D_0(x,y) \}$ we can choose $\varepsilon_0$ small enough to guarantee 
	\begin{equation*}
	\left | n^\beta D_{Q_n}(x,y) - n^\beta D_{\tilde Q_n}(\tilde x, \tilde y)   \right | < \frac{\varepsilon}{2}
	\end{equation*}
On the other hand, since $\phi$ is an isometry it holds 
\[
\mathcal D_{f,\beta}(x,y) = \inf_{\gamma \subset \M}\int_{\gamma} \frac{1}{f^\beta} =  \inf_{\sigma \subset S} \int_\sigma \frac{1}{g^\beta} = \mathcal D_{g,\beta}(\tilde x, \tilde y).
\] 
Finally,
	\begin{align}
	 \mathbb{P} \bigg(  \big | n^\beta D_{Q_n}(x, y) -  \mu \mathcal D_{f, \beta}(x,y)  \big | > \varepsilon \bigg)  
	 & \leq \, 
	  \mathbb{P} \bigg( \left | n^\beta D_{\tilde Q_n}(\tilde x, \tilde y ) - \mu \mathcal D_{g, \beta}(\tilde x , \tilde y )  \right | > \frac{\varepsilon}{2} \bigg) \label{eq:mani1} \\
	 & + \,
	 \mathbb P \bigg ( n^\beta D_{\tilde Q_n} (\tilde x, \tilde y) < 2 \mu f_{min}^{-\beta} \D_0(x,y)  \bigg). \label{eq:mani2} \\
	 & + \,
	 \mathbb P \bigg ( \max_{i<k_n} | q_{i} - q_{i+1} | > \delta_0  \bigg). \label{eq:mani3}
	\end{align}
	We bound \eqref{eq:mani1} by means of Theorem \ref{teo:main_theorem_poisson}. Lemma \ref{lema:bounds} is used to bound \eqref{eq:mani2} and Lemma \ref{lema:spacing.manifold} to bound \eqref{eq:mani3}, which concludes the proof.
	\end{proof}

%
%
%
%
%
%
%

\section{The arc length of geodesics}
\label{arclength}

{In this section we show a bound for the arc length of geodesics. We think this result is of independent interest. We then prove that miscroscopic geodesics converge to macroscopic ones. }
\begin{proof}[Proof of Proposition \ref{prop:bounded.arc.length}] 
Denote $r_n:=r_{Q_n,\alpha}(x,y)$ and $(q_1, \dots, q_{k_n}):=r_n$ the particles that form the minimizing path. Notice that $k_n$ is the number of particles in $r_n$.  From H\"older's inequality we have
	\begin{equation*}
	|\overline{r_n}| \leq  {k_n}^{(\alpha-1)/\alpha} D_{Q_n}(x,y)^{1/\alpha},
	\end{equation*}
	Then,
	\begin{align*}
	\mathbb{P} ( |\overline{r_n}|  & > \ell ) 
	 \leq 
	\mathbb{P} \left( n^\beta D_{Q_n}(x, y) \left( k_n n^{-1/d} \right)^{\alpha - 1} > \ell |\overline{r_n}|^{\alpha-1} \right) \nonumber \\
	& \leq 
	\mathbb{P} \left(  n^\beta D_{Q_n}(x, y) \left( \frac{k_n}{n^{1/d} |\overline{r_n}|} \right)^{\alpha-1}  > \ell \right) \nonumber \\
	& \leq 	
	\mathbb{P} \left( n^\beta D_{Q_n}(x, y) > 2\mu m_f^{-\beta}\D_0(x,y) \right) +
	\mathbb{P} \bigg( \frac{k_n}{n^{1/d} |\overline{r_n}|} > \left( {\ell}/{2\mu m_f^{-\beta}\D_0(x,y)} \right)^{1/(\alpha - 1)} \bigg). 
 	\end{align*}
The first term can be bounded by means of \eqref{eq:bounds_fmin}. To bound the second one we will show the existence of positive constants $\constH, \constI, \constJ$, with $\constJ$ depending only on $\delta$, such that
 	\begin{equation}
 	\mathbb P \left( \frac{k_n}{n^{1/d} |\overline{r_n}|} > \constH \right) \leq \constI \exp \left( - \constJ n^{1/d} \right).
 	\label{eq:bounded_KN}
 	\end{equation}
 	Then, if we take $\ell \ge  K \constH^{\alpha-1}$, we can conclude \eqref{eq:bounded_path}. The proof of \eqref{eq:bounded_KN} is similar to the one of Lemma 3 in \cite{HN}. Hereafter we include the adaptation of that proof to our context. Let us consider a covering $\mathcal C$ of $\R^d$ by closed cubes $C$ of edge size $\varepsilon = \varepsilon_0 n^{-1/d}$ and vertices in $\varepsilon_0 n^{-1/d} \mathbb Z^d$. That is, if $C \in \mathcal C$, then $C = z + [0, \varepsilon_0 n^{-1/d}]^d$ for some $z \in \varepsilon_0 n^{-1/d} \mathbb Z^d$. Let $\mathsf m_n = \# \{ C \in \mathcal C \colon C \cap \overline{r_n} \neq \emptyset \}$. 
	We say that two cubes (cells) $C$ and $C'$ are adjacent if they share a face and we denote that $C \sim C'$. We call $(C_{1}, \ldots, C_{m})$ a {\em path of cells} of length $m$ if $C_{j} \sim  C_{j+1}$ for every $j=1, \dots, m-1$. 
	Let us consider the event
	\begin{equation*}
	E_n^m =  \left \{ \text{There exist a path  } (C_{1}, \ldots, C_{m}) \text{ with } \# \bigcup_{j=1}^m C_{j}\cap Q_n \ge \frac{m}{2d} \right \}.
	\end{equation*}		
	Given $m$ cells $C_{1}, C_{2}, \ldots, C_{m}$, it is clear that $\# \bigcup_{j=1}^m C_{j}\cap Q_n$ is stochastically bounded by a random variable $V_m \sim \text{Poisson}(m \varepsilon_0^d M_f)$. By means of Chernoff bounds we get for $\theta \in \R$ that 
	\begin{align*}
	\mathbb{P} \left( \# \bigcup_{j=1}^m C_{j}\cap Q_n \geq \frac{m}{2d} \right) 
	& \leq
	\mathbb{P} \left( V_m \geq \frac{m}{2d} \right) \nonumber \\
	& = 
	\mathbb{P} \left( e^{ \theta V_m} \geq e^{\theta \frac{ m}{2d}} \right)  \nonumber \\
	& \leq
	\exp \left( -\theta\frac{ m}{2d} \right) \mathbb{E} \left ( e^{\theta V_m} \right ) \nonumber\\
	& = 
	\exp \left( - \theta\frac{ m}{2d} + m \varepsilon_0^d M_f (e^\theta - 1)  \right). 
	\end{align*}	 
	The total number of paths of cells of length $m$ with $x \in C_1$ is bounded above by $(2d)^{m}$. Then,
	\begin{equation*}
	\mathbb{P}(E_n^m) \leq \left ( 2d\exp \left( - \theta/{2d} \right) \exp \left( \varepsilon_0^d M_f (e^\theta - 1 ) \right) \right )^{m}.
	\end{equation*}		
 	Choosing $\theta>0$ such that $(2d) e^{-\theta / 2d} < e^{-1}/2$ and $\varepsilon_0 > 0$ such that $e^{\varepsilon_0^d M_f (e^\theta - 1 ) } < 2$, we obtain $\mathbb{P}(E_n^m) \leq e^{-m}$. Notice that any (particle) path from $x$ to $y$ must intersect at least $\kappa_4 \varepsilon_0^{-1} | x - y | n^{1/d}$ cells, for some geometric constant $\kappa_4 > 0$ that depends on $d$. Let 
 	\begin{equation*}
 	F_n = \left \{ \frac{\mathsf m_n}{2d} \leq k_n \right \} \subset \bigcup_{m \geq \frac{\kappa_4}{\varepsilon_0} | x - y | n^{1/d}} E_n^m.
 	\end{equation*}
 	Then,
 	\begin{equation*}
 	\mathbb{P}(F_n) \leq \sum_{m = \lfloor \frac{\kappa_4}{\varepsilon_0} |x - y| n^{1/d} \rfloor}^\infty \mathbb{P}\left( E_n^m \right) \leq e (1 - e^{-1})^{-1} e^{-\frac{\kappa_4}{\varepsilon_0} | x - y | n^{1/d}}.
 	\end{equation*}
 	Let $(C_1, C_2, \ldots , C_{\mathsf m_n})$ be the path of cells intersected by $\overline{r_n}$ sorted according to $r_n$. That is, let $(\gamma_n(t))_{0\le t \le |\overline{r_n}|}$ be the parametrization by arc length of the polygonal through $(q_1, \dots, q_{k_n})$ with $\gamma_n(0)=x$, $\gamma_n(|\overline{r_n}|)=y$. Then the cell-path is defined by
 	\[
 	C_1 \ni x, \quad \tau_0 =0,\qquad C_{j}\neq C_{j-1}, \quad C_j \ni \gamma({\tau_j}) \text{ with } \tau_j=\inf \{t>\tau_{j-1}  \colon \gamma(t) \notin C_{j-1} \}
 	\]
If $F_n^c$ occurs, then there are at least $\mathsf m_n/3d$ indices $i$ for which $d$ divides $i$, $i+d-1 < \mathsf m_n$ and $C_j\cap Q_n=\emptyset$ for all  $j$ with $i \leq j < i+d$. For each of these indices, there is a straight line that passes completely through $d$ adjacent cells $C_j$ and consequently crosses $d+1$ different hyperplanes of the grid $\varepsilon \Z^d$ . Using the Pigeonhole principle, we conclude that the straight line passes through two parallel hyperplanes separated by at least $\varepsilon$, that is, each line segment of $\overline{r_n}$ that passes completely through $d$ contiguous empty cells contributes at least $\varepsilon$ to the length $|\overline{r_n}|$. In other words, $|\overline{r_n}| \ge \frac{\mathsf m_n}{3d} \varepsilon$. Then
 	\begin{equation*}
 	k_n \leq \frac{\mathsf m_n}{2d} \leq \frac{3}{2\varepsilon_0} n^{1/d} |\overline{r_n}| \quad \text{in $F_n^c$}. 
 	\end{equation*}
Choosing
\begin{equation*}
	\constH \ge \frac{3}{2\varepsilon_0} 
	\quad , \quad
	\constI \ge e(1-e^{-1})^{-1} 
	\quad , \quad
	\constJ \le \frac{\kappa_4 \delta}{\varepsilon_0} \le \frac{\kappa_4}{\varepsilon_0} | x - y | 
	\end{equation*}	 	
 	we get \eqref{eq:bounded_KN}. We conclude the proof by taking $\constF(\delta) = \min \{ \constB(\delta) , \constJ(\delta) \}$ and from $\constC<1/d$.
\end{proof}

We are ready to prove Corollary \ref{cor:convrgence.of.geodesics}.

  	\begin{proof}[Proof of Corollary \ref{cor:convrgence.of.geodesics}]  We first need to define a topology in the space of curves contained in $S$. Let $\mathcal S$ be the set of continuous and rectifiable curves in $S$. For $\gamma, \gamma' \in \mathcal S$ define
	\begin{equation*}
	d_{\mathcal S}(\gamma, \gamma') = \min_{\substack{h \in P_\gamma \\ g \in P_{\gamma'}}}  \max_{t \in [0,1]} | h(t) - g(t) |.
 	\end{equation*}
 	Here $P_\gamma = \{h \colon [0,1] \to S, \, h \text{ is a parametrization of } \gamma \}$. Notice that $d_{\mathcal S}(\gamma, \gamma') < \delta$ implies  $\gamma \subset B(\gamma', \delta)$ and $\gamma' \subset B(\gamma,\delta)$. For every $\ell >0$, the set $\{ \gamma \in \mathcal S \colon |\gamma| \le \ell \}$ is compact with respect to this metric, \cite[Lemma 3]{myers1945arcs}. Observe also that the map $\gamma \mapsto \int_\gamma f^{-\beta}$ is continuous from $\mathcal S$ to $\R$.
 	
	For $\varepsilon_4 > 0$, we will see that the event $d_{\mathcal S}(\overline{r_n}, \gamma^\star) \geq \varepsilon_4$ occurs finitely many times. Since $\gamma^\star$ is the unique minimizer, there exist $\varepsilon_5 > 0$ such that 
  	\begin{equation*}
  	\int_{\gamma^\star} \frac{1}{f^\beta} + \varepsilon_5 < \inf_{d_{\mathcal S}(\gamma, \gamma^\star) \geq \varepsilon_4 } \int_\gamma \frac{1}{f^\beta}.
  	\end{equation*}
	Given $\varepsilon > 0$, by means of Theorem \ref{teo:main_theorem_poisson} with $S=B(\gamma, \delta)$ and the compactness of $\{|\gamma|<\ell^\star\}$ we get the existence of $\delta > 0$ such that for all $\gamma$ with $|\gamma|<\ell^\star$ 
  	\begin{equation}
	\mathbb{P} \left( \bigg | n^\beta D_{Q_n \cap B(\gamma,{\delta})}(x, y) - \mu  \int_{\gamma} \frac{1}{f^\beta} \bigg | > \varepsilon \right) < \exp({- \constY n^{\constC}}), \label{eq:esto_probamos_para_curvas}
  	\end{equation}
for some constant $\constY > 0$. Take $\varepsilon = \varepsilon_5 / 2$ and $\delta_5$ such that \eqref{eq:esto_probamos_para_curvas} holds. From the compactness of bounded sets of $\mathcal S$ we get the existence of a finite number of curves $\gamma^1, \gamma^2, \ldots , \gamma^m \in S \setminus \{ \gamma :  d_{\mathcal S}(\gamma, \gamma^\star) < \varepsilon_4 \} $ such that for every $\gamma \subset S$ continuous and rectifiable, with  arc length bounded by $\ell^\star$ and such that $d_{\mathcal S}(\gamma, \gamma^\star) \geq \varepsilon_4$, there exists $\gamma^j$ with $d_{\mathcal S}(\gamma, \gamma^j) < \min \{ \varepsilon_4, \delta_5 \}$. Then
	\begin{align*}
	\mathbb P \bigg ( d_{\mathcal S} (\overline{r_n} , \gamma^i) & < \delta \bigg )
	 \leq 
	\mathbb P \bigg ( n^\beta D_{Q_n}(x, y) = n^\beta D_{Q_n \cap B(\gamma^i,\delta_5)} ( x, y)  \bigg ) \\
	& \leq 
	\mathbb P \bigg ( \mu  \int_{\gamma^\star} \frac{1}{f^\beta} + \frac{\varepsilon_5}{2} > n^\beta D_{Q_n}(x, y) = n^\beta D_{Q_n \cap B(\gamma^i,\delta_5)} ( x, y) > \mu  \int_{\gamma^i} \frac{1}{f^\beta} - \frac{\varepsilon_5}{2}  \bigg )	\\
	& +
	\mathbb P \bigg ( \bigg | n^\beta D_{Q_n \cap B(\gamma^i,\delta_5) }(x, y) - \mu  \int_{\gamma^i} \frac{1}{f^\beta} \bigg | > \frac{\varepsilon_5}{2} \bigg) \\
	& +
	\mathbb P \bigg ( \bigg | n^\beta D_{Q_n}(x, y) - \mu  \int_{\gamma^\star} \frac{1}{f^\beta} \bigg | > \frac{\varepsilon_5}{2} \bigg ).
	\end{align*}
	The first term is zero and the last two terms decay exponentially fast as $n\to \infty$. By Borel-Cantelli's lemma, the event
	$\{ \overline{r_n} \subset S \setminus \{ \gamma : d_{\mathcal S}(\gamma, \gamma^\star) < \varepsilon_4 \} \}$ occurs finitely many times with probability one, as we wanted to prove.
	\end{proof}

\section{Restriction to nearest neighbors}
\label{implementation}
{In this section we prove that if we restrict ourselves to paths composed by $k$ nearest neighbors, the sample Fermat distance remains unchanged with high probability when $k \approx\log n$. This reduces the computational cost from $\mathcal O (n^3)$ to $\mathcal O(n^2 \log^2n)$.}
\begin{proof}[Proof of Proposition \ref{prop:k_nn}]
Recall that Given $k \geq 1$ and $q \in Q_n$, we denote the $k$-th nearest neighbor of $q$ by $q^{(k)}$ and we denote $\mathcal N_k(z) = \{ q^{(1)}, q^{(2)} , \ldots , q^{(k)} \}$ the set of $k$-nearest neighbors of $q$.

Given two points $z_1, z_2 \in S$ we define
		\begin{equation*}
		A^\alpha_{{z_1,z_2}} = \left \{  z \in S : | z_1 - z|^\alpha + | z_2 - z|^\alpha < | z_2 - z_1|^\alpha  \right \}.
		\end{equation*}
	There exists a constant $\delta>0$, that depends only on $\alpha$ such that $B((z_1+z_2)/2, \delta |\overline{z_1 z_2}| ) \subset A^\alpha_{z_1,z_2}.$ 
	Let $q_1, q_2 , \ldots, q_{k_n}$ be the optimal path and define
	\begin{equation*}
	k^\star = \min \left \{  k \in \N : q_{i+1}  \in \mathcal{N}_k(q_i) \text{ for all } i < k_n \right \}.
	\end{equation*}
	We need to prove
	\[
	 \P(k^\star > \constS \log ( n / \varepsilon ) + \constT) <\varepsilon.
	\]
	Notice that for every $1\le i \le k_n$, $A^\alpha_{{q_i,q_{i+1}}} \cap Q_n = \emptyset$ since if it is nonempty we can construct a path with lower cost than the minimizing path.
	For $k \in \N$, define the {random variable}
	\begin{align*}
	{\mathsf s}_k = \sup\Big \{ & s \colon \text{ there exists a ball $B_{s}$ with radius $s$ that contains at least $k$} \\ & \text{particles and another ball $B_{\delta s} \subset B_{s} $ with radius $\delta s$ and } B_{\delta s} \cap Q_n = \emptyset \Big \},
	\end{align*}
	and $A_k=\{\mathsf s_k > 0\}$. Here we use the convention $\sup \emptyset =0$.
	Since $q_{i+1} = (q_i)^{(k)}$ implies $A_k$, we have 
	\begin{equation}
	\Big \{ k^\star \geq k \Big \} \subset \bigcup_{j=k}^\infty A_j.
	\label{eq:Knn_max}
	\end{equation}
	Define
	\begin{equation*}
	\underline{\mathsf s} = \frac{1}{3}\left( \frac{k}{2M_fn} \right)^{1/d}, \quad  \bar{\mathsf{s}} =  2 \sqrt{d} \left( \frac{2k}{m_fn} \right)^{1/d},
	\end{equation*}
	Clearly $\underline{\mathsf s} < \bar{\mathsf{s}}$ and
	\begin{equation}
	\mathbb P (A_k) 
	= 
	\mathbb P (0 < \mathsf s_k < \underline{\mathsf s}) + 
	\mathbb P (\mathsf s_k > \bar{\mathsf{s}}) +
	\mathbb P (\mathsf s_k \in [\underline{\mathsf s}, \bar{\mathsf{s}}]).
	\label{eq:k_nn_partoProba}
	\end{equation}
	We proceed to bound each term in  \eqref{eq:k_nn_partoProba}. 
	\begin{align*}
	\mathbb P (0< \mathsf s_k < \underline{\mathsf s}) 
	& \leq \mathbb P \left( \text{$\exists$ a ball $B_{\underline{\mathsf s}} \subset S$ with radius $\underline{\mathsf s}$ with at least $k$ particles} \right) \\
	& \leq \mathbb P \left( \text{$\exists$ a cube $C_{2\underline{\mathsf s}} \subset S$ of edge size $2\underline{\mathsf s}$ with at least $k$ particles} \right).
	\end{align*}
	Consider the family $\mathcal C$ of cubes $C \subset \R^d$ with edge size $3\underline{{\mathsf s}}$ and vertices in $\underline{\mathsf s}\mathbb Z^d$. Notice that the number of elements in  $\mathcal C_S = \{C\cap S \colon C\in \mathcal C\}$ is bounded above by $\kappa^1_S n / k$, for some constant $\kappa^1_S$ that depends on the diameter of $S$. On the other hand, any cube with edge size $2\underline{\mathsf s}$ is strictly contained in a cube $C \in \mathcal C$. The number of particles in $C\cap S$ is a Poisson random variable with parameter bounded above by  $3^d\underline{\mathsf s}^d M_fn = k/2$. Then,
	\begin{equation*}
	\mathbb P (0  < \mathsf s_k < \underline{\mathsf s}) \leq 
	\kappa_S^1 \frac{n}{k} e^{-\theta_1 k},
	\end{equation*}	  
	for some positive constant $\theta_1$. Next,
	\begin{align*}
	\mathbb P \left(\mathsf s_k > \bar{\mathsf{s}} \right) 
	& \leq  \mathbb P \left( \text{$\exists$ a ball $B_{\bar{\mathsf s}} \subset S$ with radius $\bar{\mathsf s}$ with $k$ particles} \right) \\
	& \leq	\mathbb P \left( \text{$\exists$  a cube $C_{\bar{\mathsf{s}}/\sqrt{d}}\subset S$ with edge size $\bar{\mathsf{s}}/\sqrt{d}$ with at most $k$ particles}  \right).
	\end{align*}
	Now we consider the family $\mathcal C'$ of cubes $C\subset \R^d$ with edge size $\bar{\mathsf{s}}/(2\sqrt{d})$ and vertices in $(\bar{\mathsf s}/(2\sqrt{d})) \mathbb Z^d$. The number of elements in {$\mathcal C'_S = \{C\in \mathcal C' \colon C\subset S\}$} is bounded above by $\kappa_S^2 n / k$. If there is a cube  $C_{\bar{\mathsf{s}}/\sqrt{d}}$ with at most $k$ particles, then there is $C\in C'_S$ with at most $k$ particles. The number of particles in $C$ is Poisson with parameter at least $\bar{\mathsf{s}}^d m_f n / (2^d d^{d/2}) = 2k$. Then
	\begin{equation*}
	\mathbb P (\mathsf s_k  > \bar{\mathsf{s}}) \leq \kappa_S^2 \frac{n}{k} e^{-\theta_2 k},
	\end{equation*}
        for some positive constant $\theta_2$. Finally
	\begin{align*}
	\mathbb P \left(\underline{\delta \mathsf s} \le \mathsf s_k \le \bar{\mathsf{s}} \right) 
	& \leq
	\mathbb P \left( \text{$\exists$ ball $B_{\delta \underline{\mathsf s}} \subset S$ with radius $\delta \underline{\mathsf s}$ and } B_{\delta \underline{\mathsf s}}\cap Q_n =\emptyset \right) \\
	& \leq 
	\mathbb P \left( \text{$\exists$ cube $C_{\delta \underline{\mathsf s}/\sqrt{d}} \subset S$ with edge size $\delta \underline{\mathsf s} / \sqrt{d}$ and $C_{\delta \underline{\mathsf s}/\sqrt{d}}\cap Q_n=\emptyset$} \right).
	\end{align*}
	We proceed as before but now with the grid $(\delta \underline{\mathsf s} / 2\sqrt{d})\mathbb Z^d$. There is at most $\kappa_S^3 n/k$ cubes with vertices in the grid and nonempty intersection with $S$, the number of particles in a cube is Poisson with intensity no greater than $\underline{\mathsf s}^d M_fn / (2^d d^{d/2}) =  k / (2^{d+1} 3^d d^{d/2})$. Then, 
	\begin{equation*}
	\mathbb P \left(\underline{\mathsf s} \le  \mathsf s_k \le \bar{\mathsf{s}} \right) \leq \kappa_S^3 \frac{n}{k} e^{-\theta_3 k},
 	\end{equation*}
	with $\theta_3= (2^{d+1} 3^d d^{d/2})^{-1}$. We conclude that 
	\begin{equation*}
	\mathbb P (A_k) \leq \kappa_S \frac{n}{k} e^{-\theta k},
	\end{equation*}
	for $\theta = \min \{ \theta_1, \theta_2,  \theta_3 \}$ and $\kappa_S = \kappa_S^1 + \kappa_S^2 + \kappa_S^3$. By \eqref{eq:Knn_max} we get
	\begin{equation*}
	\mathbb{P} \left( k^\star \geq k \right) 
	\leq
	\sum_{j = k}^\infty \kappa_S \frac{n}{j} e^{-\theta j} \leq \kappa_S \frac{n}{k} (1 - e^{-\theta})^{-1} e^{-\theta k} 
	< \kappa_S n (1 - e^{-\theta})^{-1} e^{-\theta k}.
	\end{equation*}
	So, we can guarantee $\mathbb{P} \left( k^\star \geq k \right) < \varepsilon$ if 
	\begin{equation*}
	k {>} \frac{1}{\theta} \log \left( \frac{\kappa_S}{1-e^{-\theta}} \frac{n}{\varepsilon} \right).
	\end{equation*}
	This concludes the proof.
	  \end{proof}
\section*{Acknowledgments}
We want to thank Daniel Carando, Gabriel Larotonda, and Chuck Newman for enlightening conversations and acknowledge the team at Aristas, especially Yamila Barrera and Alfredo Umfurer, for useful discussions and implementation of algorithms. We also thank {Steven Damelin} and {Daniel Mckenzie} for private communications that helped us to clarify our respective contributions.
\bibliographystyle{plain}
\bibliography{biblio}

\end{document}